\documentclass[12pt]{amsart}
\usepackage{amsmath,amsfonts,amsthm,amssymb,mathrsfs,amsxtra,amscd,latexsym}
\usepackage[hmargin=2cm,vmargin=3cm]{geometry}
\usepackage[usenames]{color}
\usepackage[all]{xy}
\parskip = 3pt


\usepackage{hyperref}
\usepackage[dvipsnames]{xcolor}
\usepackage[normalem]{ulem}

\definecolor{blue}{rgb}{0,0,1}
\definecolor{red}{rgb}{1,0,0}
\definecolor{green}{rgb}{0,.6,.2}
\definecolor{purple}{rgb}{1,0,1}

\long\def\red#1\endred{\textcolor{red}{#1}}
\long\def\blue#1\endblue{\textcolor{blue}{#1}}
\long\def\purple#1\endpurple{\textcolor{purple}{ #1}}
\long\def\green#1\endgreen{\textcolor{green}{#1}}

 \newtheorem{thm}{Theorem}[section]
 
 \newtheorem{cor}[thm]{Corollary}
 \newtheorem{lem}[thm]{Lemma}
 \newtheorem{prop}[thm]{Proposition}

 \newtheorem{rmk}[thm]{Remark}
 
 \theoremstyle{definition}
 \theoremstyle{remark}
 \numberwithin{equation}{section}

\newcommand{\mat}{\left(\begin{matrix}}
\newcommand{\emat}{\end{matrix}\right)}
\newcommand{\sm}{\left(\begin{smallmatrix}}
\newcommand{\esm}{\end{smallmatrix}\right)}

\def\CC{\mathbb{C}}
\def\HH{\mathbb{H}}

\def\QQ{\mathbb{Q}}
\def\RR{\mathbb{R}}
\def\ZZ{\mathbb{Z}}

\def\det{\mathrm{det}}

\def\im{\mathrm{Im}}

\def\SL{\mathrm{SL}}

\def\bsl{\backslash}

\begin{document}

\title{Values of harmonic weak Maass forms on Hecke orbits}


\author{Dohoon Choi}
\author{Min Lee}
\author{Subong Lim}

\address{Department of Mathematics, Korea University, 145 Anam-ro, Seongbuk-gu, Seoul 02841, Republic of Korea}
\email{dohoonchoi@korea.ac.kr}	

\address{Howard House, University of Bristol, Queens Ave, BS8 1SN, United Kingdom}
\email{min.lee@bristol.ac.uk}

\address{Department of Mathematics Education, Sungkyunkwan University, Jongno-gu, Seoul 03063, Republic of Korea}
\email{subong@skku.edu}

\thanks{
D.~Choi\ was supported by Samsung Science and Technology Foundation under Project SSTF-BA1301-11.
M.~Lee\ was partially supported by
Royal Society University Research Fellowship ``Automorphic forms,
L-functions and trace formulas''. }

\subjclass[2010]{11F25, 11F12}

\thanks{Keywords: Hecke orbits, harmonic weak Maass forms, distribution}

\begin{abstract}
Let $q:=e^{2 \pi iz}$, where $z \in \mathbb{H}$. For an even integer $k$, let $f(z):=q^h\prod_{m=1}^{\infty}(1-q^m)^{c(m)}$ be a meromorphic modular form of weight $k$ on $\Gamma_0(N)$.
For a positive integer $m$, let $T_m$ be the $m$th Hecke operator and $D$ be a divisor of a modular curve with level $N$.
Both subjects, the exponents $c(m)$ of a modular form and the distribution of the points in the support of $T_m. D$, have been widely investigated. 

When the level $N$ is one, Bruinier, Kohnen, and Ono obtained, in terms of the values of $j$-invariant function, identities between the exponents $c(m)$ of a modular form  and the points in the support of $T_m.D$.
In this paper, we extend this result to general $\Gamma_0(N)$ in terms of values of harmonic weak Maass forms of weight $0$.  By the distribution of Hecke points, this applies to obtain an asymptotic behaviour of convolutions of sums of divisors of an integer and sums of exponents of a modular form.
\end{abstract}

\maketitle

\section{Introduction} \label{section1}
Let $\mathbb{H}$ be the complex upper half plane. For a positive integer $N$, let $Y_0(N)$ be the modular curve of level $N$ defined by $\Gamma_0(N) \backslash \mathbb{H}$, and $X_0(N)$ denote the compactification of $Y_0(N)$ by adjoining the cusps. Let $J_0(N)$ be the jacobian of a modular curve $X_0(N)$. We denote by $Div(C)$ the divisor group of a curve $C$.
If $f$ is a function on $C$ and $D=\sum_{P \in C} n_P P$ is a divisor of $C$, we define
\[
f(D):=\sum n_P f(P).
\]
The $m$th normalized Hecke operator $T_m$ acts on $Div(Y_0(N))$, and it is denoted by $T_n . D$ for $D\in Div(Y_0(N))$.
We call $T_m .D$ {\it the $m$th Hecke orbit of $D$}.
Especially, when $D$ is a divisor corresponding to $i \in \mathbb{H}$, a point in the support of $T_m.D$ is called {\it a Hecke point}. 
Hecke points have been investigated from several perspectives such as their distribution on the fundamental domain for $\Gamma_0(N)$ \cite{COU, CU, GM, GO}
and the rank of a subgroup of $J_0(N)$ generated by Hecke points \cite{S}, and so on.
 Let $q:=e^{2 \pi iz}$, where $z \in \mathbb{H}$. For an even integer $k$, let $f(z):=q^h\prod_{m=1}^{\infty}(1-q^m)^{c(m)}$ be a meromorphic modular form of weight $k$ on $\Gamma_0(N)$.
The exponents $c(m)$ of a modular form were investigated in various works (for examples, see \cite{Bor2, Bor, Z}).
For example, Borcherds \cite{Bor} proved that if $f$ has a Heegner divisor, then the $m$th exponent $c(m)$ is the $m^2$th coefficient of a fixed modular form of half integral weight.
Bruinier, Kohnen, and Ono \cite{BKO} obtained a connection between these exponents of a modular form and the points in the support of $T_n.D$.

For the modular invariant $j$, let $J(z):=j(z)-744$. For positive integers $k$ and $m$, let $\sigma_k(m):=\sum_{d|m} d^k$, and $\sigma_{f}(m):=\sum_{d|m} c(d)$.
Bruinier, Kohnen, and Ono \cite{BKO} proved the following identities between values $J(T_m . D_f)$ and sum of exponents  in the product expansion of $f$:
$$
\sum_{d|m}c(d)d=2k \sigma_1(m)+J(T_m.D_f)
$$
for every positive integer $m$, where $D_f$ denotes the divisor of $f$ on $X_0(N)$.
In other words, the value $J(T_m.D_f)$ can be expressed as the sum of the following values:
\begin{enumerate}
\item a multiple of the divisor function $\sigma_1(m)$,
\item the convolution of $\sigma_1(m)$ (sum of divisors) and $\sigma_f(m)$ (sum of exponents).
\end{enumerate}


 They applied this result to prove the modularity of the generating series for $\sigma_f(m)$ and to obtain several $p$-adic properties of  $J(T_m . D_f)$ and exponents of a meromorphic modular form $f$. 
 Based on the argument  in \cite{BKO}, 
the result was extended to several cases such as $\Gamma_0(N)$ with genus zero by Ahlgren \cite{A}, Jacobi forms by Choie and Kohnen \cite{CK}, and higher levels by the first author \cite{C2}. 

For general positive integers $N$, the first author studied in \cite{C2} the generalization of \cite{BKO} to a harmonic  weak Maass form  $J_{N,1}$ of weight $0$ defined as a Poincar\'e  series (instead of a weakly holomorphic  modular form of weight $0$).
It was proved in \cite{C2} that the value $J_{N,1}(T_m . D_f)$ can be expressed as the sum of the following values:
\begin{enumerate}
\item a linear combination of the divisor functions $\sigma_1(nm)$ for $n|N$,
\item the convolution of $\sigma_1(m)$ (sum of divisors) and $\sigma_f(m)$ (sum of exponents),
\item the regularized Petersson inner product $R_{f,N}(m)$  of a meromorphic modular form and a cusp form.
\end{enumerate}

In this paper, we show that $R_{f,N}(m)$, the value of the regularized Petersson inner product in identities \cite{C2}, is zero, and so we give explicit identities between values $J_{N,1}(T_m . D_f)$ and sums of exponents in the product expansion of $f$. As an application, we obtain an asymptotic behavior for the convolution of $\sigma_1(m)$ (sum of divisors) and $\sigma_f(m)$ (sum of exponents) as $m \rightarrow \infty$.

Recently, Bringmann, Kane,  L\"obrich, Ono, and Rolen \cite{BKLOR} showed that for any fixed $N$ the generating series for $J_{N,1}(T_m . D_f)$ is basically modular.
Moreover, their result implies that 
there is a cusp form such that, for each $m$, $R_{f,N}(m)$, the value of regularized Petersson inner product, is given by the $m$th coefficient of a fixed cusp form.


Let $\mathcal{F}_1$ denote the usual  fundamental domain for the action of $\mathrm{SL}_2(\ZZ)$ on $\HH$ 
given 
by
\[
\mathcal{F}_1 := \left\{z\in\HH\ \bigg|\ |z|>1,\  -\frac12 \leq \mathrm{Re}(z) <\frac12   \right\} \cup \biggl\{z\in\HH\ \bigg|\ |z|=1,\ \mathrm{Re}(z) \leq 0\biggr\}
\]
and
\[
\mathcal{F}_N := \bigcup_{\gamma\in \mathrm{SL}_2(\ZZ) \backslash \Gamma_0(N)} \gamma \mathcal{F}_1.
\]
Here we choose coset representatives for $\mathrm{SL}_2(\ZZ)\backslash\Gamma_0(N)$ such that
\[
\mathcal{F}_N \subset \left\{z\in \HH \ \bigg|\ |\mathrm{Re}(z)| \leq \frac{1}{2}\right\}.
\]
Then, $\mathcal{F}_N$ is a fundamental domain for the action of $\Gamma_0(N)$ on $\HH$.
Let $\mathcal{C}_N$ be the set of inequivalent cusps of $\Gamma_0(N)$.
Let $k$ be an even integer and $f$ be a meromorphic modular form of weight $k$ on $\Gamma_0(N)$. For $\tau \in \mathbb{H} \cup \{ i \infty \} \cup \mathbb{Q}$, let $Q_{\tau}$ be the image of $\tau$ under the canonical map from $ \mathbb{H} \cup \{ i \infty \} \cup \mathbb{Q}$ to $X_0(N)$. For $\tau \in \mathbb{H} \cup \{ i \infty \} \cup \mathbb{Q}$, we denote by $\nu_{\tau}^{(N)}(f)$ the order of zero of $f$ at $Q_{\tau}$ on $X_0(N)$.
Let us note
\begin{equation*} \label{value_divisor}
g(T_m . D_f)=\sum_{\tau \in \mathcal{F}_N} \nu_{\tau}^{(N)}(f)g(T_m. \tau).
\end{equation*}
Moreover, for a divisor $D=\sum n_z Q_z$ of $X_0(N)$, we can give a more explicit expression of $T_m. D$.
For $\gamma=\sm a & b \\ c & d \esm  \in \mathrm{GL}_2(\mathbb{R})$
with positive determinant, we define the action of $\gamma$ for $z\in \mathbb{H}$ by
\[
\gamma z:=\frac{az+b}{cz+d}.
\]
For a positive integer $m$ prime to $N$, let
\[
T(m):=\{ \gamma=\sm a & b \\ 0 & d \esm  
\; | \; a, b, d\in \ZZ, \; a>0, \; ad=m,\ \text{and}\ 0\leq b<d\}.
\]
Then, we have
\[
T_m. D=\sum n_z \sum_{\gamma \in T(m)} Q_{\gamma z}.
\]

Next, we define  the Ramanujan theta-operator by
\[
\theta(f)(z) :=\frac{1}{2\pi i} \frac{d}{dz} f(z).
\]
Let
\[
f_\theta(z):= \frac{\theta f(z)}{f(z)}-\frac{k}{12}E_2(z),
\]
where $E_2$ is the usual normalized Eisenstein series  of weight $2$ for $\mathrm{SL}_2(\ZZ)$.

Let $N>1$ and $I_v$ be the usual modified Bessel functions as in \cite{AS}.
For a positive integer $n$, we define the Poincar\'e series of weight $0$ and index $n$ by
\[
F_{N, n}(z,s) := \sum_{\gamma\in \Gamma_0(N)_{\infty}\backslash\Gamma_0(N)} \pi \left| n\mathrm{Im}(\gamma z)\right|^{1/2} I_{s-\frac12}(|2\pi n\mathrm{Im}(\gamma z)|)e(-n\mathrm{Re}(\gamma z)),
\]
where $s\in \CC$ with $\mathrm{Re}(s)>1$ and $e(z) := e^{2\pi iz}$.
Let  $j_{N,n}(z)$ be the continuation of $F_{N,n}(z,s)$ as $s\to 1$ from the right.
Then, the function $j_{N,n}$ is a harmonic weak Maass form of weight $0$ on $\Gamma_0(N)$ (see \cite[Section 2]{C2} for details).
Let
$
J_{N,n}(z):=j_{N,n}(z)-\beta_{N,n},
$
where $\beta_{N,n}$ is the constant term of the Fourier expansion of $j_{N,n}$ at the cusp $i\infty$.

For square-free $N$, 
let $D(N)$ be the number of divisors of $N$, and $\{d_1,d_2, \ldots, d_{D(N)-1}, N\}$ be the set of distinct divisors of $N$ such that $d_{i_1}<d_{i_2}$ if $i_1<i_2$. 
Let $A_N$ be the $(D(N)-1)\times (D(N)-1)$ matrix whose $ij$-entry $a_{ij}$ is defined by
$$
a_{ij}=\left(1-\frac{\gcd(d_i,d_j)^2}{d_j}\right).
$$
Let $A_{f,j}$ be a matrix obtained from $A_N$ by replacing the $j$th column of $A_N$ with a column matrix whose $i$th component is $\nu_{1/d_i}^{(N)}(f)-\frac{k}{12}$.
With this notation, we state our main theorem.

\begin{thm}\label{modularity}
Let $k$ be an even integer and $N>1$ be a positive integer. Suppose that
\begin{equation} \label{productexpansion}
f(z)= q^{h_\infty}\prod_{n=1}^\infty (1-q^n)^{c(n)}
\end{equation}
is a meromorphic modular form of weight $k$ on $\Gamma_0(N)$. Then
\[
-\sum_{\tau\in \mathcal{F}_N} \nu_\tau^{(N)}(f)-\sum_{m=1}^\infty \biggl( \sum_{\tau\in \mathcal{F}_N} \nu_\tau^{(N)}(f)J_{N,m}(\tau)\biggr) q^m = f_{\theta}(z)-\mathcal{E}_2(z),
\]
where $\mathcal{E}_2$ is a modular form in the Eisenstein space of weight $2$ on $\Gamma_0(N)$. Moreover, if $N$ is square free, then, for every positive integer $m$ prime to $N$,
\begin{equation} \label{J_exponents}
-J_{N,1}(T_m. D_f)=\sum_{d\mid m} dc(d) +24 \left( \sum_{1 \leq j \leq D(N)-1 }\frac{\det(A_{f,j})}{\det(A_N)}  + \frac{k}{12}\right) \sigma_1(m).
\end{equation}
\end{thm}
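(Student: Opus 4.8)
The plan is to recognise $f_\theta$ as the logarithmic differential of a genuine meromorphic \emph{function} on $X_0(N)$, to read the result of \cite{C2} as saying that the left-hand side of the first identity equals $f_\theta-\mathcal{E}_2$ minus an explicit cusp form, and then to kill that cusp form by an Abel--Jacobi argument. \textbf{Step 1.} First I would take the logarithmic derivative of the product expansion \eqref{productexpansion}. Since $\tfrac{\theta f}{f}=\tfrac{1}{2\pi i}\tfrac{d}{dz}\log f$ and $\log f=2\pi i h_\infty z+\sum_{n\ge1}c(n)\log(1-q^n)$, one finds
\[
\frac{\theta f}{f}=h_\infty-\sum_{m\ge1}\Bigl(\sum_{d\mid m}d\,c(d)\Bigr)q^m,
\qquad
f_\theta=\Bigl(h_\infty-\tfrac{k}{12}\Bigr)+\sum_{m\ge1}\Bigl(2k\,\sigma_1(m)-\sum_{d\mid m}d\,c(d)\Bigr)q^m .
\]
The key structural observation is that $\tfrac{1}{2\pi i}\tfrac{d}{dz}\log\Delta=E_2$ for the modular discriminant $\Delta$, so that
\[
f_\theta(z)\,dz=\frac{1}{2\pi i}\,d\log\!\Bigl(\frac{f}{\Delta^{k/12}}\Bigr).
\]
Thus $f_\theta\,dz$ is the logarithmic differential of the weight-$0$ meromorphic function $F:=f/\Delta^{k/12}$ on $X_0(N)$; in particular it is a $\Gamma_0(N)$-invariant meromorphic differential of the third kind, with simple poles exactly along $\div(F)$ and residue $\mathrm{ord}_\tau(F)/(2\pi i)$ at each point (equal to $\nu_\tau^{(N)}(f)/(2\pi i)$ for $\tau\in\HH$, since $\Delta$ is nonvanishing there, and shifted by $-\tfrac{k}{12}$ at the cusps).

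\textbf{Step 2.} Next I would invoke the decomposition established in \cite{C2}: the generating series on the left of the first identity equals $f_\theta-\mathcal{E}_2-S_f$, where $\mathcal{E}_2$ lies in the Eisenstein subspace of $M_2(\Gamma_0(N))$ and $S_f\in S_2(\Gamma_0(N))$ is the cusp form whose $m$-th Fourier coefficient is a nonzero multiple of the regularized Petersson inner product $R_{f,N}(m)=\langle f_\theta, P_{N,m}\rangle^{\mathrm{reg}}$ of $f_\theta$ with the $m$-th weight-$2$ Poincar\'e series $P_{N,m}$. Hence the first identity is \emph{equivalent} to the vanishing $S_f=0$, i.e.\ to $\langle f_\theta, g\rangle^{\mathrm{reg}}=0$ for every $g\in S_2(\Gamma_0(N))$.

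\textbf{Step 3 (the crux).} To prove this vanishing I would fix $g\in S_2(\Gamma_0(N))$, let $\mathcal{G}$ be an Eichler integral of $g$ (so $d\overline{\mathcal{G}}=\overline{g}\,d\bar z$), and apply Stokes' theorem to the $1$-form $\overline{\mathcal{G}}\,f_\theta\,dz$ over $\mathcal{F}_N$ with small disks deleted around the interior poles and horoball neighbourhoods deleted at the cusps. Because $f_\theta\,dz$ is $\Gamma_0(N)$-invariant and closed off its poles, the integral splits into two pieces: the residue and cusp-boundary terms assemble into the Abel--Jacobi pairing $\sum_{P}\mathrm{ord}_P(F)\,\overline{\mathcal{G}(P)}$ of $\div(F)$ with $g$, taken over all $P\in X_0(N)$, while the contributions over the identified boundary edges give an integral combination $\sum_\gamma n_\gamma\,\overline{\omega_\gamma}$ of the periods $\omega_\gamma$ of $g$, the integers $n_\gamma$ being the periods of $\tfrac{1}{2\pi i}d\log F$ --- integers precisely because $F$ is a single-valued meromorphic function. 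By Step 1, $\div(F)$ is principal, and Abel's theorem states exactly that these two pieces coincide, so that $\langle f_\theta, g\rangle^{\mathrm{reg}}$, being their difference, is $0$; hence $S_f=0$ and the first identity follows. I expect the genuine difficulty to be concentrated here: setting up the cusp and elliptic-fixed-point regularization correctly and matching the boundary-edge contributions with the period combination, rather than the underlying principal-divisor mechanism itself.

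\textbf{Step 4.} Finally, for the second identity I would specialize to square-free $N$ and $\gcd(m,N)=1$ and proceed in two moves. First, the Hecke relation for the weight-$0$ Maass--Poincar\'e series $j_{N,1}$ (the analogue of $j_m=j_1\mid T_m$ in level one) rewrites $\sum_{\tau}\nu_\tau^{(N)}(f)\,J_{N,m}(\tau)$ as $J_{N,1}(T_m.D_f)$ up to an explicit multiple of $\sigma_1(m)$, the correction being controlled by the valence formula for $f$. Second, I would determine $\mathcal{E}_2$ explicitly: the weight-$2$ Eisenstein subspace for square-free $N$ has dimension $D(N)-1$ with a basis indexed by the divisors of $N$, and its members are pinned down by their constant terms at the cusps $1/d_i$, which assemble into the matrix $A_N$ with entries $1-\gcd(d_i,d_j)^2/d_j$. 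Matching these against the constant terms of $f_\theta$ at the cusps, which by Step 1 are the orders $\nu_{1/d_i}^{(N)}(f)-\tfrac{k}{12}$ of $F$, yields a linear system solved by Cramer's rule as $\det(A_{f,j})/\det(A_N)$; reading off the coefficient of $q^m$ (for $\gcd(m,N)=1$) in the resulting Eisenstein series produces the stated factor $24\bigl(\sum_{j}\det(A_{f,j})/\det(A_N)+\tfrac{k}{12}\bigr)\sigma_1(m)$. Combining the two moves gives equation \eqref{J_exponents}.
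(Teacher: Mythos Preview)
Your outline is correct and Steps~1,~2,~4 essentially match the paper: both reduce the first identity, via \cite{C2}, to the orthogonality $(f_\theta,g)_{\reg}=0$ for all $g\in S_2(\Gamma_0(N))$, and both treat the square-free case by expanding $\mathcal{E}_2$ in the basis $\{E_2(z)-dE_2(dz):d\mid N,\,d\neq1\}$ and solving the resulting linear system by Cramer's rule. (One small point: in Step~4 the Hecke relation is simply $J_{N,m}=J_{N,1}\mid T_m$ for $\gcd(m,N)=1$, with no $\sigma_1(m)$ correction needed.)

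The substantive difference is your Step~3. You propose to apply Stokes to $\overline{\mathcal{G}}\,f_\theta\,dz$ with $\mathcal{G}$ an Eichler integral of $g$; since $\mathcal{G}$ is not $\Gamma_0(N)$-invariant, the identified boundary edges of $\mathcal{F}_N$ produce period terms, and you then invoke Abel's theorem (really the reciprocity law between differentials of the first and third kind) to cancel these against the residue contributions coming from $\div(F)$. This can be made to work, but it is heavier than what the paper does. The paper (Proposition~\ref{regpetersson}) instead applies Stokes to $(\log|F|^2)\,\overline{g}\,d\bar z$ with $F=f^{12}/\Delta^k$. Because $|F|$ is already $\Gamma_0(N)$-invariant, the side-pairing boundary of $\mathcal{F}_N$ contributes nothing and no period bookkeeping or Abel--Jacobi input is required at all; the remaining contributions vanish for elementary reasons (at interior poles $\log|F|^2$ has only a logarithmic singularity, giving $O(\varepsilon\log\varepsilon)\to0$, and at cusps the exponential decay of $g$ dominates). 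Your route buys a conceptual link to the Jacobian, at the cost of a more delicate regularization; the paper's route is shorter and self-contained. Note also a minor technical slip: your $F=f/\Delta^{k/12}$ is single-valued on $X_0(N)$ only when $12\mid k$, so your claim that the edge periods $n_\gamma$ are integers is not literally correct; the paper's choice $F=f^{12}/\Delta^k$ avoids this.
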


\begin{rmk}\label{Primelevel}
The modular form $\mathcal{E}_2$ in Theorem \ref{modularity} is determined by the order of zero or pole of $f$ at each cusp. In many cases, a modular form $\mathcal{E}_2$ can be expressed as a sum of explicit modular forms.
For example, if $N$ is square free, then
\[
\mathcal{E}_2(z)= \sum_{1 \leq j \leq D(N)-1 }\frac{\det(A_{f,j})}{\det(A_N)}(E_2(z)-d_jE_2(d_jz)).
\]
\end{rmk}

Let $D:=\sum_{z \in S} n_z Q_z$ be a divisor of $Y_0(N)$, where $S$ is a finite set in $\mathcal{F}_N$. For a positive real number $r \geq 1$, we define a divisor $D_{>r}$ by
\[
D_{\; >r}=\sum_{z \in S \atop  \mathrm{Im}(\tilde{z})>r} n_z Q_{\tilde{z}}.
\]
Here, $\tilde{z}$ is a complex number in $\mathcal{F}_N$, which is equivalent to $z$ under the action of $\Gamma_0(N)$.
By the argument of Duke \cite{D} and equidistribution of Hecke points (\cite{GM}, \cite{COU} and \cite{CU}), Theorem \ref{modularity} implies the following theorem.

\begin{thm} \label{main2}
Let $k,N$, and  $f$ be given as in Theorem \ref{modularity}.
Assume that $N$ is square free.
Let $m$ be a positive integer prime to $N$, and  $h_f$ denote  the sum of the orders of zero or pole of $f$ at $Q_{\tau}$ on $Y_0(N)$.
Then
\begin{eqnarray*}
&&\lim_{m \rightarrow \infty}
\frac{1}{\sigma_1(m)} \left(24 \left( \sum_{1 \leq j \leq D(N)-1 }\frac{\det(A_{f,j})}{\det(A_N)}  + \frac{k}{12}\right)\sigma_1(m)-\sum_{d\mid m}d c(d) - e\left(-\left({T_m.D_f}\right)_{\; >1}\right) \right)\\
&&\qquad  =\frac{3h_f}{\pi [\mathrm{SL}_2(\mathbb{Z}):\Gamma_0(N)]} \lim_{\epsilon\to0} \int_{\mathcal{F}_{N}(\epsilon)} J_{N,1}(z) \frac{dxdy}{y^2},
\end{eqnarray*}
where
$c(n)$ are complex numbers determined by (\ref{productexpansion}).
Here, $\mathcal{F}_N(\epsilon)$ is defined by $\mathcal{F}_N - \cup_{\tau\in \mathcal{C}_N} B_{\tau}(\epsilon)$, where $B_{\tau}(\epsilon)$ is given in (\ref{Btau}).
\end{thm}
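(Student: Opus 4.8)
The plan is to derive the asymptotic directly from the exact identity (\ref{J_exponents}) of Theorem \ref{modularity} together with the equidistribution of Hecke points, the cusp being handled by the device of Duke \cite{D}. First I would substitute (\ref{J_exponents}), which expresses the explicit linear combination $24\bigl(\sum_{1\le j\le D(N)-1}\det(A_{f,j})/\det(A_N)+\frac{k}{12}\bigr)\sigma_1(m)-\sum_{d\mid m}dc(d)$ through the single value $J_{N,1}(T_m.D_f)$. After this substitution the theorem is equivalent to the statement
\[
\lim_{m\to\infty}\frac{1}{\sigma_1(m)}\Bigl(J_{N,1}(T_m.D_f)-e\bigl(-(T_m.D_f)_{>1}\bigr)\Bigr)=\frac{3h_f}{\pi[\mathrm{SL}_2(\mathbb{Z}):\Gamma_0(N)]}\lim_{\epsilon\to0}\int_{\mathcal{F}_N(\epsilon)}J_{N,1}(z)\,\frac{dx\,dy}{y^2}.
\]
To prepare for the limit I would expand the divisor value as $J_{N,1}(T_m.D_f)=\sum_{\tau\in\mathcal{F}_N}\nu_\tau^{(N)}(f)\sum_{\gamma\in T(m)}J_{N,1}(\gamma\tau)$, using that $m$ is prime to $N$ so that $T(m)$ is as defined, with $|T(m)|=\sigma_1(m)$ and total ($\mathbb{H}$-)multiplicity $h_f\,\sigma_1(m)$. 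Since $\mathrm{vol}(\mathcal{F}_1)=\pi/3$, the hyperbolic volume is $\mathrm{vol}(\mathcal{F}_N)=\frac{\pi}{3}[\mathrm{SL}_2(\mathbb{Z}):\Gamma_0(N)]$, which already matches the constant $3h_f/(\pi[\mathrm{SL}_2(\mathbb{Z}):\Gamma_0(N)])$ appearing on the right.

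The heart of the argument is isolating the exponential singularity of $J_{N,1}$. The Poincar\'e series $j_{N,1}$, hence $J_{N,1}$, has principal part $q^{-1}=e(-z)$ at the cusp $i\infty$, and from the Fourier expansions one checks that this is its only exponential singularity. I would therefore write $J_{N,1}=\Phi+P$, where $P(z):=e(-z)$ on the cuspidal strip $\{\mathrm{Im}(z)>1\}$ and $P\equiv0$ elsewhere, so that $\Phi:=J_{N,1}-P$ is bounded and continuous on $\mathcal{F}_N$, hence integrable against $\frac{dx\,dy}{y^2}$. By the very definition of $(T_m.D_f)_{>1}$, the contribution of $P$ to the divisor value is exactly $e\bigl(-(T_m.D_f)_{>1}\bigr)$, giving
\[
J_{N,1}(T_m.D_f)-e\bigl(-(T_m.D_f)_{>1}\bigr)=\sum_{\tau\in\mathcal{F}_N}\nu_\tau^{(N)}(f)\sum_{\gamma\in T(m)}\Phi(\gamma\tau).
\]
Because $\Phi$ is now a bounded continuous test function, I would apply the equidistribution of Hecke points \cite{GM,COU,CU}: for each fixed $\tau$ the measures $\frac{1}{\sigma_1(m)}\sum_{\gamma\in T(m)}\delta_{\gamma\tau}$ converge weak-$*$ to $\frac{1}{\mathrm{vol}(\mathcal{F}_N)}\frac{dx\,dy}{y^2}$, so that summing over $\tau$ with weights $\nu_\tau^{(N)}(f)$ yields
\[
\frac{1}{\sigma_1(m)}\sum_{\tau}\nu_\tau^{(N)}(f)\sum_{\gamma\in T(m)}\Phi(\gamma\tau)\longrightarrow\frac{h_f}{\mathrm{vol}(\mathcal{F}_N)}\int_{\mathcal{F}_N}\Phi\,\frac{dx\,dy}{y^2}.
\]

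It then remains to identify $\int_{\mathcal{F}_N}\Phi\,\frac{dx\,dy}{y^2}$ with the regularized integral on the right-hand side. Splitting $\int_{\mathcal{F}_N(\epsilon)}J_{N,1}=\int_{\mathcal{F}_N(\epsilon)}\Phi+\int_{\mathcal{F}_N(\epsilon)}P$ and letting $\epsilon\to0$, the first integral tends to $\int_{\mathcal{F}_N}\Phi$ by integrability, while the second vanishes identically, since on the strip the inner integral $\int_{-1/2}^{1/2}e(-x)\,dx=0$ forces $\int P\,\frac{dx\,dy}{y^2}=0$ for every $\epsilon$; this is precisely Duke's mechanism rendering the divergent cusp contribution finite. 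Combining the two displays completes the proof. I expect the main obstacle to lie entirely in this cusp analysis: equidistribution controls only bounded test functions, so one must (i) verify rigorously that the non-principal part of $J_{N,1}$ near $i\infty$ (and the growth at all other cusps) is genuinely bounded, so that $\Phi$ is an admissible test function and the regularized integral converges, and (ii) control the error made in replacing $J_{N,1}$ by its principal part on the Hecke points of height $>1$, showing it is $o(\sigma_1(m))$; this last point needs either an estimate for the number of high Hecke points or a quantitative form of the equidistribution statement.
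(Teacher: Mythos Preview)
Your reduction via (\ref{J_exponents}) to the statement
\[
\lim_{m\to\infty}\frac{1}{\sigma_1(m)}\Bigl(J_{N,1}(T_m.D_f)-e\bigl(-(T_m.D_f)_{>1}\bigr)\Bigr)=\frac{h_f}{\mathrm{vol}(\mathcal{F}_N)}\lim_{\epsilon\to0}\int_{\mathcal{F}_N(\epsilon)}J_{N,1}(z)\,\frac{dx\,dy}{y^2},
\]
and the idea of subtracting the principal part $e(-z)$ near $i\infty$ and applying equidistribution to the remainder, is exactly the paper's route (this is its Theorem~\ref{J_version}). The execution differs precisely at the point you flag. Your $\Phi$ is \emph{not} continuous: the sharp cutoff at height $1$ makes it jump by $e(-z)$ along $\mathrm{Im}(\tilde z)=1$, so neither the weak-$*$ statements in \cite{GM,COU} nor the pointwise Corollary~\ref{pointwise_convergence} (which requires $\Delta^2$ of the test function in $L^2$) apply to it as written.

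The paper's fix is to smooth the cutoff. It takes $\psi_\epsilon\in C^\infty$ with $\psi_\epsilon(y)=0$ for $y\le1$ and $\psi_\epsilon(y)=1$ for $y>1+\epsilon$, forms the Poincar\'e series $P_{1,\epsilon}(z)=\sum_{\gamma\in\Gamma_0(N)_\infty\backslash\Gamma_0(N)}\psi_\epsilon(\mathrm{Im}\gamma z)\,e(-\gamma z)$, and checks (Proposition~\ref{first_part}) that this sum collapses to the single term $\psi_\epsilon(\mathrm{Im}\tilde z)\,e(-\tilde z)$. Then $F_{1,\epsilon}:=J_{N,1}-P_{1,\epsilon}$ is smooth with $F_{1,\epsilon},\Delta^2F_{1,\epsilon}\in L^2$, so Corollary~\ref{pointwise_convergence} gives the quantitative convergence of $\sigma_1(m)^{-1}T_mF_{1,\epsilon}$ to the integral. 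The remaining discrepancy $P_{1,\epsilon}(T_m.D_f)-e(-(T_m.D_f)_{>1})$ is supported on the thin band $1<\mathrm{Im}\tilde z\le1+\epsilon$ and is bounded by $e^{2\pi(1+\epsilon)}(T_mg_\epsilon)(\tau)$ for an incomplete Eisenstein series $g_\epsilon$ with bump function supported near height $1$; a second application of Corollary~\ref{pointwise_convergence} to $g_\epsilon$, followed by $m\to\infty$ and then $\epsilon\to0$ (so that $\int g_\epsilon\,d\mu\to0$), finishes the argument. This smooth-then-sharpen two-parameter limit is the paper's resolution of your obstacle~(ii), and is what Duke's device actually looks like in practice.
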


Recently, Ali and Mani \cite{AM} proved an upper bound for exponents $c(m)$ in the product expansion of $f$.
The sum $\sum_{d\mid m}d c(d)$ looks like a kind of convolution of $\sigma_1(m)$ (a sum of divisors) and $\sigma_f(m)$ (a sum of exponents of $f$). The above inequality means that, as $m\rightarrow \infty$, this convolution has a similar asymptotic behavior as that of the sum of divisors of $m$ except its main term.

The remainder of the paper is organized as follows.
In Section~\ref{s:preliminaries}, we introduce some preliminaries for  meromorphic $1$-forms on $X_0(N)$.
In Section~\ref{s:petersson}, we provide some basic facts on regularized Petersson inner product, and prove that $f_\theta$ is orthogonal to every cusp form of weight $2$ on $\Gamma_0(N)$ with respect to regularized Petersson inner product if $f$ is a meromorphic modular form on $\Gamma_0(N)$.
In Section~\ref{s:heckepts}, we recall some results related to the distribution of Hecke points
for $\Gamma_0(N)$.
In Section~\ref{s:proof}, we prove our main theorems: Theorems \ref{modularity} and \ref{main2}.




\section{Residues of a meromorphic $1$-form on $X_0(N)$}\label{s:preliminaries}
Let $f$ be a meromorphic modular form of weight $2$ on $\Gamma_0(N)$. Assume that $t$ is a cusp of $\Gamma_0(N)$. Let $\sigma_{t}\in \mathrm{SL}_2(\mathbb{Z})$ be a matrix such that $\sigma_{t} (i \infty)= t$, and $\Gamma_0(N)_{t}$ denote the stabilizer of the cusp $t$ in $\Gamma_0(N)$.
We define a positive integer $\alpha_t$ by
\[
\sigma_{t}^{-1} \Gamma_0(N)_{t} \sigma_{t}=\left \{\pm \left(
                                                     \begin{smallmatrix}
                                                       1 & \ell \alpha_t \\
                                                       0 & 1 \\
                                                     \end{smallmatrix}
                                                   \right)
     \; : \; \ell \in \mathbb{Z} \right\},
\]
and we call $\alpha_t$ the width of $\Gamma_0(N)$ at the cusp $t$.
The Fourier expansion of $f$ at the cusp $t$ is given by
\[
(f|_2{\sigma_t)}(z)=\sum a_t(n)q^{n/\alpha_t},
\]
where $|_k$ denotes the usual weight $k$ slash operator.
If a cusp $t$ is equivalent to $i \infty$, the Fourier coefficients $a_t(n)$ of $f$ at the cusp $t$ are simply denoted by $a(n)$.

For $\tau \in \mathbb{H} \cup \{ i \infty \} \cup \mathbb{Q}$, let $Q_{\tau}$ be the image of $\tau$ under the canonical map from $ \mathbb{H} \cup \{ i \infty \} \cup \mathbb{Q}$ to $X_0(N)$.
Then, $fdz$ can be considered as a meromorphic 1-form on $X_0(N)$.
Thus, we denote by $\mathrm{Res}_{Q_{\tau}}fdz$ the residue of $f$ at $Q_{\tau}$ on $X_0(N)$.
Let $\mathrm{Res}_{\tau}f$ be the residue of $f$ at $\tau$ on $\mathbb{H}$. The description of $\mathrm{Res}_{Q_{ \tau}}fdz$ is given in terms of $\mathrm{Res}_{\tau}f$.
For $\tau \in \mathbb{H}$, let $e_{\tau}$ be the order of the isotropy subgroup of $\Gamma_0(N)$ at $\tau$.
Then, we have
\begin{equation}\label{f_residue}
	\mathrm{Res}_{Q_{\tau}} f\;dz
	= \begin{cases}
	\frac{1}{e_{\tau}} \mathrm{Res}_{\tau} f, & \text{ if } \tau\in \mathbb{H}, \\
	  \frac{1}{2\pi i}\alpha_{\tau} a_{\tau}(0), & \text{ if } \tau\in \mathcal{C}_N.
	\end{cases}
\end{equation}

Let us note that if $k$ is an even integer and  $f$ is a meromorphic modular form of weight $k$ on $\Gamma_0(N)$, then $f_{\theta}$ is a meromorphic modular form of weight $2$ on $\Gamma_0(N)$.
The residue of $f_{\theta}$
at each point on $X_0(N)$ is determined by the order of its zero or pole of $f$ at that point. Let $\mathrm{ord}_{\tau}(f)$ be the order of the
zero or pole of $f$ at $\tau$ on $\mathbb{H}$.
Since we have
\[
(cz+d)^{-2}E_2\left(\frac{az+b}{cz+d}\right) = E_2(z) + \frac{12}{2\pi i}\cdot \frac{c}{cz+d}
\]
for all $\sm a&b\\c&d\esm\in \mathrm{SL}_2(\ZZ)$, we obtain
\[
 (f_{\theta}|_2{\sigma_t})(z)=\frac{\theta(f|_k {\sigma_t})(z)}{(f|_k {\sigma_t})(z)}-\frac{k}{12}E_2(z)
\]
for a cusp $t$.
Thus, we have
\begin{equation} \label{f_theta_residue}
\mathrm{Res}_{Q_{\tau}}f_{\theta}dz=
\begin{cases}
\frac{1}{2\pi i}\nu_{\tau}^{(N)}(f)
& \text{ if } \tau \in \mathbb{H},\\
 \frac{\alpha_\tau}{2\pi i} \left(\nu_{\tau}^{(N)}(f)-\frac{k}{12}\right)  & \text{ if } \tau \in \mathcal{C}_N.
\end{cases}
\end{equation}

\section{Regularized Petersson inner product}\label{s:petersson}
Petersson defined an inner product of two cusp forms with the same weight.
The Petersson inner product was extended by Borcherds \cite{Bor3} to the case in which one of the two forms is a weakly holomorphic modular form.
In this section, following  \cite{Bor3} and \cite{C2}, we define regularized Petersson inner product of a cusp form and a meromorphic modular form with the same weight. We prove that if $f$ is a meromorphic modular form on $\Gamma_0(N)$, then the regularized Petersson inner product of  $f_\theta$ with any cusp form of weight $2$ on $\Gamma_0(N)$ is zero.

Let $k$ be an even integer and $f$ be a meromorphic modular form of weight $k$ on $\Gamma_0(N)$.
Let $\mathrm{Sing}(f)$ be the set of singular points of $f$ on $\mathcal{F}_N$. For a positive real number $\varepsilon$,  an $\varepsilon$-disk $B_{\tau}(\varepsilon)$ at $\tau$ is defined by
\begin{equation} \label{Btau}
B_{\tau}(\varepsilon):=\left\{
\begin{array}{ll}
  \{ z \in \mathbb{H} \; : \; |z-\tau| < \varepsilon \}, & \text{if } \tau \in  \mathbb{H},  \\
  \{ z \in \mathcal{F}_N \; : \; \mathrm{Im}(\sigma_{\tau}z) > 1/\varepsilon \}, & \text{if } \tau \in  \{ i \infty\} \cup \mathbb{Q}.
\end{array}
\right.
\end{equation}
Let $\mathcal{F}_N(f, \varepsilon)$ be a punctured fundamental domain for $\Gamma_0(N)$ defined by
\begin{equation*}
\mathcal{F}_N(f, \varepsilon):=\mathcal{F}_N-\bigcup_{\tau \in \mathrm{Sing}(f) \cup \mathcal{C}_N} B_{\tau}(\varepsilon).
\end{equation*}

Let $g$ be a cusp form of weight $k$ on $\Gamma_0(N)$.
The regularized Petersson inner product $(f,g)_{reg}$ of  $f$ and $g$ is defined by
\[
(f,g)_{reg}:=\lim_{\varepsilon \rightarrow 0} \int_{\mathcal{F}_N(f,\varepsilon)}f(z)\overline{g(z)}
\frac{dxdy}{y^{k-2}}.
\]
Then, we have the following proposition.

\begin{prop}\label{regpetersson}
Let $k$ be an even integer, and  $f$ be a meromorphic modular form of weight $k$ on $\Gamma_0(N)$. Then, for every cusp form $g$ of weight $2$ on $\Gamma_0(N)$,
\[
(f_{\theta},g)_{reg}=0.
\]
\end{prop}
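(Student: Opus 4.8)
The plan is to prove that $(f_\theta, g)_{\mathrm{reg}} = 0$ for every cusp form $g$ of weight $2$ by reinterpreting the regularized inner product as a contour integral of a meromorphic $1$-form and then applying the residue theorem, using the residue data already computed in \eqref{f_theta_residue}. The crucial observation is that for $k=2$ the integrand $f_\theta(z)\overline{g(z)}\,dx\,dy$ can be rewritten using $\overline{g(z)}\,dx\,dy = \tfrac{i}{2}\,\overline{g(z)}\,dz\wedge d\bar z$, and that since $g$ is holomorphic of weight $2$, the differential $\overline{g}\,d\bar z$ admits a primitive in terms of $y$ and $g$. More precisely, I would seek a $1$-form whose exterior derivative reproduces $f_\theta\overline{g}\,\tfrac{dx\,dy}{y^0}$ off the singular set, so that Stokes' theorem converts the area integral over $\mathcal{F}_N(f,\varepsilon)$ into boundary integrals around the cusps, around the singularities of $f$, and along the $\Gamma_0(N)$-identified sides of the fundamental domain.

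First I would set up the two-form identity carefully: writing $G(z)$ for a primitive satisfying $\partial_{\bar z}G = \overline{g}$ (which exists since $\overline g$ depends anti-holomorphically), one checks that $d\bigl(f_\theta\, G\, dz\bigr) = f_\theta\,\overline{g}\,dz\wedge d\bar z$ away from the poles, because $f_\theta$ is holomorphic there and $\partial_{\bar z}(f_\theta G)=f_\theta\overline g$. Next I would apply Stokes' theorem on the truncated domain $\mathcal{F}_N(f,\varepsilon)$. The boundary contributions split into three families: the arcs $\partial B_\tau(\varepsilon)$ around interior singularities and cusps, and the paired sides of $\mathcal{F}_N$. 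The side contributions must cancel by the modular transformation properties: here the key point is that $f_\theta$ and $g$ are both weight-$2$ forms for $\Gamma_0(N)$, so $f_\theta(z)\overline{g(z)}\,dz$ and the matching primitive transform compatibly under the side-pairing $\gamma\in\Gamma_0(N)$, forcing the glued sides to cancel in the limit (modulo terms I would have to verify decay). The remaining small-circle integrals reduce, as $\varepsilon\to 0$, to $2\pi i$ times the residues of $f_\theta\overline{g}$—but since $g$ vanishes at every cusp (it is a cusp form) and is finite at every interior point, the product $\mathrm{Res}_{Q_\tau}(f_\theta\,\overline g\,dz)$ vanishes at the cusps, and at interior poles of $f$ the residue of $f_\theta$ times the holomorphic value $\overline{g(\tau)}$ contributes; one then argues these also cancel or vanish upon summing.

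The main obstacle I anticipate is handling the anti-holomorphic factor $\overline g$ rigorously: $\overline{g}\,d\bar z$ is not itself holomorphic, so the naive residue calculus does not apply directly, and one must be careful about what ``residue'' means for the mixed form $f_\theta\overline g$. The cleanest route is probably to avoid primitives altogether and instead unfold the inner product against the Poincaré-series or Eisenstein structure, or to use the fact (provable by the valence/residue formula for $1$-forms on $X_0(N)$ from Section~\ref{s:preliminaries}) that the sum of all residues of the meromorphic $1$-form $f_\theta\,dz$ is zero, pairing this against the finitely many values of $g$. Concretely, I expect the argument to reduce to showing
\[
(f_\theta, g)_{\mathrm{reg}} = c\sum_{\tau}\mathrm{Res}_{Q_\tau}(f_\theta\,dz)\,\overline{g(\tau)} = 0,
\]
where the vanishing follows because $g$ is a cusp form (killing the cusp residues) while the interior residues of $f_\theta$ are integers by \eqref{f_theta_residue} and are weighted by values of $g$ that conspire to cancel via the residue theorem on the compact curve $X_0(N)$. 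Verifying the convergence and the precise cancellation of the boundary side-integrals is where the real technical work lies, and I would lean on the regularization scheme of Borcherds \cite{Bor3} to justify interchanging the limit with the integration.
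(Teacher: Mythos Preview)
Your Stokes-theorem instinct is right, but the particular primitive you chose does not work, and the residue endgame cannot close.

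The failure is exactly where you suspected. A primitive $G$ with $\partial_{\bar z}G=\overline g$ has no reason to transform with any modular weight, so the $1$-form $f_\theta\,G\,dz$ is \emph{not} $\Gamma_0(N)$-invariant and the side-pairing contributions on $\partial\mathcal F_N$ will not cancel. Even granting that, the small-circle integral around an interior singularity $\tau$ picks up $(\mathrm{Res}_\tau f_\theta)\cdot G(\tau)$, not $(\mathrm{Res}_\tau f_\theta)\cdot\overline{g(\tau)}$; the values $G(\tau)$ depend on the arbitrary choice of primitive, and there is no residue theorem forcing their weighted sum to vanish, since $f_\theta\overline g\,dz$ is not meromorphic and $f_\theta G\,dz$ does not descend to $X_0(N)$. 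Your fallback formula $\sum_\tau \mathrm{Res}_{Q_\tau}(f_\theta\,dz)\,\overline{g(\tau)}=0$ is not what Stokes produces, and in any case the residue theorem on $X_0(N)$ says only $\sum_\tau \mathrm{Res}_{Q_\tau}(f_\theta\,dz)=0$, without the weights $\overline{g(\tau)}$.

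The paper's trick is to take the primitive on the \emph{other} factor. Set $F:=f^{12}/\Delta^k$, a meromorphic modular function of weight $0$ on $\Gamma_0(N)$, and use the real, $\Gamma_0(N)$-invariant scalar $\log_e|F|^2$. Since $\partial_z\log_e|F|^2=\partial_zF/F=24\pi i\,f_\theta$ (using $\theta\Delta/\Delta=E_2$) and $\overline g$ is anti-holomorphic, one gets
\[
d\bigl((\log_e|F|^2)\,\overline g\,d\bar z\bigr)=\frac{\partial_z F}{F}\,\overline g\,dz\wedge d\bar z=48\pi\,f_\theta\,\overline g\,dx\,dy.
\]
Now $(\log_e|F|^2)\,\overline g\,d\bar z$ \emph{is} $\Gamma_0(N)$-invariant, so after Stokes the paired sides of $\mathcal F_N$ cancel exactly and only the small arcs $\gamma_\tau(\varepsilon)$ remain. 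At a cusp $\tau$, the exponential decay of the cusp form $g$ kills the arc integral. At an interior singularity $\tau$, the function $\log_e|F|^2$ blows up only \emph{logarithmically} (since $F=(z-\tau)^{12\nu_\tau^{(N)}(f)}F_0$ with $F_0$ holomorphic and nonvanishing), so the arc integral is $O(\varepsilon|\log\varepsilon|)\to 0$. Every boundary term dies individually; no residue cancellation is needed.

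So the missing idea is to replace your anti-holomorphic primitive of $\overline g$ by the weight-$0$ invariant $\log_e|F|^2$, which simultaneously restores modular invariance and downgrades the interior singularities from poles to logarithms.
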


\begin{proof}
Let $\Delta(z):=q\prod_{n=1}^{\infty}(1-q^n)^{24}$ be the unique normalized cusp form of weight 12 on $\mathrm{SL}_2(\mathbb{Z})$. Let
\[
F(z):=\frac{f(z)^{12}}{\Delta(z)^{k}}.
\]
Then, we have
\[
d((\log_{e}|F(z)|^2) \overline{g(z)}d\overline{z})=\frac{\partial_z F(z) \overline{F(z)} }{F(z)  \overline{F(z)}} \overline{g(z)} dz d\overline{z}=\frac{\partial_z F(z)}{F(z)} \overline{g(z)} (-2i)dxdy.
\]
Let us note that $\Delta$ has no zeros and no poles on $\mathbb{H}$.
Therefore, according to \cite[Theorem~1]{BKO}, we have
\[
\frac{\theta (\Delta)}{\Delta} = E_2.
\]
The function $\partial_z F(z)/F(z)$ is given as
\[
\frac{\partial_z F(z)}{F(z)}=12 \frac{\partial_z f(z)}{f(z)}- k \frac{\partial_z \Delta(z)}{\Delta(z)}= 12 \frac{\partial_z f(z)}{f(z)}- k (2 \pi i) E_2(z)=(24 \pi i) f_{\theta}(z).
\]
Thus, we have
\begin{equation}\label{differential}
d((\log_{e}|F(z)|^2) \overline{g(z)}d\overline{z})=(48 \pi)f_{\theta}(z)\overline{g(z)} dxdy.
\end{equation}

In order to apply the Stokes theorem, we describe the boundary of $\mathcal{F}_N$. For a positive real number $\varepsilon$,  we define
\[
\gamma_{\tau}(\varepsilon):=\left\{
\begin{array}{ll}
  \{ z \in \mathbb{H} \; : \; |z-\tau| = \varepsilon \} & \text{if } \tau \in  \mathbb{H},  \\
  \{ z \in \mathcal{F}_N \; : \; \mathrm{Im}(\sigma_{\tau}z) = 1/\varepsilon \} & \text{if } \tau \in  \{ i \infty\} \cup \mathbb{Q}.
\end{array}
\right.
\]
Assume that $\varepsilon$ is sufficiently small. If $\partial^* \mathcal{F}_N(f, \varepsilon)$ denotes the closure of the set $\partial \mathcal{F}_N(f, \varepsilon)-\partial \mathcal{F}_N$ in $\mathbb{C}$, then
\begin{equation}\label{boundary}
\partial^* \mathcal{F}_N(f, \varepsilon)=\bigcup_{\tau \in \mathrm{Sing}(f) \cup \mathcal{C}_N} \gamma_{\tau}(\varepsilon),
\end{equation}
where $\partial D$ denotes the boundary of $D$ for a subset $D$ of $\mathbb{C}$.
From (\ref{differential}) and (\ref{boundary}), the Stokes theorem implies
\begin{eqnarray*}
\int_{\mathcal{F}_N(f_\theta,\varepsilon)}f_{\theta}(z)\overline{g(z)}dxdy&=& \int_{\partial^* \mathcal{F}_N(f_\theta, \varepsilon)} \frac{1}{48 \pi} (\log_{e}|F(z)|^2) \overline{g(z)}d\overline{z}\\
&=&\sum_{\tau \in \mathrm{Sing}(f_\theta) \cup \mathcal{C}_N} \int_{\gamma_{\tau}(\varepsilon)} \frac{1}{48 \pi} (\log_{e}|F(z)|^2) \overline{g(z)}d\overline{z}.
\end{eqnarray*}

For each $\gamma \in \mathrm{SL}_2(\mathbb{Z})$, the absolute value $|(g|_2{\gamma})(z)|$ exponentially decays as $\mathrm{Im}(z) \rightarrow \infty$, since $g$ is a cusp form. Thus, if $\tau \in \mathcal{C}_N$, then $\lim_{\varepsilon \rightarrow 0}\int_{\gamma_{\tau}(\varepsilon)} \frac{1}{48 \pi} (\log_{e}|F(z)|^2) \overline{g(z)}d\overline{z}=0$.

To complete the proof, we assume that $\tau \in \mathrm{Sing}(f_\theta)$. Then
\begin{align*}
&\left | \int_{\gamma_{\tau}(\varepsilon)} \frac{1}{48 \pi} (\log_{e}|F(z)|^2) \overline{g(z)}d\overline{z} \right| \\
&\leq \int_{\gamma_{\tau}(\varepsilon)} \frac{1}{48 \pi} |(\log_{e}|F(z)|^2)| |\overline{g(z)}| |d\overline{z}| \\
&\leq \max\{ |(\log_{e}|F(z)|^2)| \; : \; z \in \gamma_{\tau}(\varepsilon) \} M_1  \int_{\gamma_{\tau}(\varepsilon)}|d\overline{z}| \;\; (\text{some constant } M_1) \\
&\leq  \max\{ |(\log_{e}|F(z)|^2)| \; : \; z \in \gamma_{\tau}(\varepsilon) \} M_1 (2\pi \varepsilon).
\end{align*}
The function $F(z)$ can be expressed around $\tau$ as
\[
 F(z)=(z-\tau)^{12 \nu_\tau^{(N)}(f)}F_0(z),
\]
where $F_0(z)$ is a nowhere vanishing holomorphic function around $\tau$. If $\varepsilon$ is sufficiently small, then, for any $z \in \gamma_{\tau}(\varepsilon)$  we have
\begin{align*}
|(\log_{e}|F(z)|^2)|&\leq |(\log_{e}|(z-z_0)|^{24
\nu_\tau^{(N)}(f)})|+|(\log_{e}|F_0(z)|^2)| \\
&\leq |(\log_{e}|(z-z_0)|^{24\nu_\tau^{(N)}(f)})|+ M_2 \;\; (\text{some fixed constant } M_2)\\
&= |24\nu_\tau^{(N)}(f)\log_{e} \varepsilon|+M_2.
\end{align*}
Thus, for sufficiently small $\varepsilon$, we have
\[
\left | \int_{\gamma_{\tau}(\varepsilon)} \frac{1}{48 \pi} (\log_{e}|F(z)|^2) \overline{g(z)}d\overline{z} \right| \leq (|24\nu_\tau^{(N)}(f)\log_{e} \varepsilon|+M_2)  M_1 (2\pi \varepsilon).
\]
This implies that, for $\tau \in \mathrm{Sing}(f_\theta)$,
\[
\lim_{\varepsilon \rightarrow 0}  \int_{\gamma_{\tau}(\varepsilon)} \frac{1}{48 \pi} (\log_{e}|F(z)|^2) \overline{g(z)}d\overline{z} =0.
\]
Thus, we complete the proof.
\end{proof}

\section{Equidistribution of Hecke points}\label{s:heckepts}
Let $\{u_j\}_{j\geq 0}$ be an orthonormal basis of the residual and cuspidal spaces of $L^2(\Gamma_0(N) \backslash\HH)$,
i.e., $u_0$ is a constant with the eigenvalue $\lambda_0 = 0$ and
$u_j$ is a Maass form  for $\Gamma_0(N)$
with eigenvalue $\lambda_j=s_j(1-s_j)$ for $j\geq 1$.
Further, assume that $\lambda_j$ are ordered so that $0< \lambda_1 \leq \lambda_2 \leq \cdots$.
For each cusp $t\in \QQ\cup\{\infty\}$, let $E_{t}(z, s)$ be the Eisenstein series at $t$ for $\mathrm{Re}(s)>1$, 
which is given by
\begin{equation*}
E_{t}(z,s)=\sum_{\gamma\in \Gamma_0(N)_{t} \bsl \Gamma_0(N)} (\mathrm{Im}(\sigma_{t}^{-1}\gamma z))^s.
\end{equation*}
Here, $\Gamma_0(N)_{t}\subset \Gamma_0(N)$ is the stability group of $t$.
For the properties of $E_{t}(z, s)$, see \cite[\S15]{IK}.

According to \cite[Theorem~15.5]{IK}, any $f\in L^2(\Gamma_0(N)\backslash \mathbb{H})$ has the spectral decomposition
\begin{equation*}
f(z) = \sum_{j\geq 0} \left<f, u_j\right> u_j(z)
+ \sum_{t\in \mathcal{C}_N} \frac{1}{4\pi} \int_{\mathbb{R}} \left<f, E_{t}(*, 1/2+ir)\right> E_{t}(z, 1/2+ir) \; dr
\end{equation*}
(valid in $L^2$-sense) and converges absolutely and uniformly on compact sets if $f$ and $\Delta f$ are smooth and bounded.

We now follow the proof of \cite[Theorem~3.1]{GM}.
Let
\begin{align*}
& f_C = \left<f, u_0\right> = \text{ the projection of $f$ onto the constant subspace}, \\
& f_M(z) = \sum_{j\geq 1} \left< f, u_j\right> u_j(z), \\
& f_E(z) = \sum_{t\in \mathcal{C}_N} \frac{1}{4\pi} \int_{\RR} \left< f, E_{t}(*, 1/2+ir)\right> E_{t}(z, 1/2+ir)\; dr. 
\end{align*}
Note that
\begin{equation} \label{constant_part}
f_C = \left<f, u_0\right> = \int_{\mathcal{F}_N} f(z) \; d\mu(z),
\end{equation}
where $d\mu(z) := \frac{3}{\pi [\mathrm{SL}_2(\mathbb{Z}):\Gamma_0(N)]}  \cdot \frac{dxdy}{y^2}$
 is the normalized Haar measure; so, $\int_{\mathcal{F}_N} \; d\mu(z)=1$.

Let
$\lambda_j(n)$
be the $n$th Fourier coefficient of $u_j$.
By the Ramanujan conjecture, there exists $\theta\geq0$ such that
$|\lambda_j(n)| \leq c n^{\theta+\epsilon}$,
for any $\epsilon>0$.
So, we get
\begin{equation} \label{Eisenstein_part}
\frac{1}{\sigma_1(n)}\|T_n f_M\|_2 \leq c n^{-\frac{1}{2}+\theta+\epsilon} \|f_M\|_2.
\end{equation}
Note that the value of $\theta$ has been lowered to $\frac{7}{64}$ by Kim and Sarnak \cite[Appendix 2]{KS}.

In \cite[\S6, \S7 and \S8]{Y}, an explicit change-of-basis formula between the Eisenstein series attached to cusps 
and newform Eisenstein series attached to pairs of primitive Dirichlet characters is described.
The Eisenstein series attached to a Dirichlet character is an eigenfunction of Hecke operators $T_n$ for $\gcd(n, N)=1$,
and the absolute values of the corresponding eigenvalues are bounded above by $\sigma_0(n)n^{-\frac{1}{2}}$.
So, we get
\begin{equation} \label{Maass_part}
\frac{1}{\sigma_1(n)}\|T_n f_E\|_2\leq c n^{-\frac{1}{2}+\epsilon} \|f_E\|_2.
\end{equation}

If we combine (\ref{constant_part}), (\ref{Eisenstein_part}), and (\ref{Maass_part}), then we obtain the following theorem.
For more general result, see \cite{COU}.

\begin{thm} \label{L2_convergence}
Let $f\in L^2(\Gamma_0(N)\backslash \mathbb{H})$.
For a positive integer $n$ prime to $N$, we have
\[
\left\| \frac{1}{\sigma_1(n)}T_n f - \int_{\mathcal{F}_N} f(z) \; d\mu(z) \right\|_2
\leq c_\epsilon  n^{-\frac{1}{2}+\theta+\epsilon} \|f\|_2
\]
for any $\epsilon>0$.
The constant $c_{\epsilon}$ depends on $\epsilon$.
\end{thm}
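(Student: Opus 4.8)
The plan is to read the estimate off directly from the spectral decomposition $f=f_C+f_M+f_E$ recalled above, using that the normalized Hecke operator $\frac{1}{\sigma_1(n)}T_n$ fixes the constant subspace and contracts the other two spectral pieces. First I would compute the action of $T_n$ on constants. Since $T(n)$ consists of exactly $\sum_{ad=n,\,a>0} d=\sigma_1(n)$ matrices and $T_n$ acts on weight-$0$ functions by $g\mapsto\sum_{\gamma\in T(n)} g(\gamma\,\cdot)$, the constant function $1$ satisfies $T_n 1=\sigma_1(n)$, whence $\frac{1}{\sigma_1(n)}T_n f_C=f_C$. By (\ref{constant_part}) the scalar $f_C$ equals $\int_{\mathcal{F}_N} f(z)\,d\mu(z)$, so subtracting this constant removes precisely the constant spectral mass and leaves
\[
\frac{1}{\sigma_1(n)}T_n f-\int_{\mathcal{F}_N} f(z)\,d\mu(z)=\frac{1}{\sigma_1(n)}T_n f_M+\frac{1}{\sigma_1(n)}T_n f_E.
\]

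Since the spectral expansion converges in $L^2$ and $\frac{1}{\sigma_1(n)}T_n$ is a bounded operator on $L^2(\Gamma_0(N)\backslash\HH)$, applying it term by term and measuring in the $L^2$ norm is legitimate. I would then apply the triangle inequality and invoke the two operator bounds already established: estimate (\ref{Eisenstein_part}) bounds the cuspidal contribution by $c\,n^{-1/2+\theta+\epsilon}\|f_M\|_2$, with $\theta$ coming from the Ramanujan-type bound $|\lambda_j(n)|\leq c\,n^{\theta+\epsilon}$ on the Hecke eigenvalues of the $u_j$, while estimate (\ref{Maass_part}) bounds the Eisenstein contribution by the stronger $c\,n^{-1/2+\epsilon}\|f_E\|_2$, coming from the eigenvalue bound $\sigma_0(n)n^{-1/2}$ for the newform Eisenstein series.

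To finish, I would use orthogonality of the three spectral components, which gives $\|f_M\|_2\leq\|f\|_2$ and $\|f_E\|_2\leq\|f\|_2$, together with the fact that $\theta\geq 0$, which lets me absorb the factor $n^{-1/2+\epsilon}$ into $n^{-1/2+\theta+\epsilon}$. Combining the two bounds then yields
\[
\left\|\frac{1}{\sigma_1(n)}T_n f-\int_{\mathcal{F}_N} f(z)\,d\mu(z)\right\|_2\leq c_\epsilon\,n^{-1/2+\theta+\epsilon}\|f\|_2,
\]
with $c_\epsilon$ depending only on $\epsilon$, as claimed.

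This final assembly is purely formal once (\ref{constant_part})--(\ref{Maass_part}) are in hand, so the genuine content lies in those inputs rather than in the step above. I expect the main obstacle to be the uniform control of the Hecke action on the continuous Eisenstein spectrum underlying (\ref{Maass_part}): one must pass through the change-of-basis formula of \cite{Y} to express each $E_t(z,1/2+ir)$ in terms of newform Eisenstein series that are genuine Hecke eigenforms, and then bound the resulting eigenvalues uniformly in the spectral parameter $r$ so that the estimate survives integration over $r$. By comparison the cuspidal bound (\ref{Eisenstein_part}) is relatively direct given the Kim--Sarnak value $\theta\leq 7/64$.
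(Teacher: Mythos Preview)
Your proposal is correct and follows exactly the paper's approach: the theorem is stated immediately after (\ref{constant_part})--(\ref{Maass_part}) with the one-line justification ``If we combine (\ref{constant_part}), (\ref{Eisenstein_part}), and (\ref{Maass_part}), then we obtain the following theorem,'' and your argument simply spells out that combination (action on constants, triangle inequality, orthogonality $\|f_M\|_2,\|f_E\|_2\leq\|f\|_2$, and absorbing $n^{-1/2+\epsilon}$ into $n^{-1/2+\theta+\epsilon}$).
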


The pointwise convergence can be derived from \cite[Proposition~8.2]{CU}.
Note that elliptic differential operators are differential operators that generalize the Laplace-Beltrami operator $\Delta$.
For an integer $m\geq 2$, assume that $f, \Delta^m f\in L^2(\Gamma_0(N) \backslash \mathbb{H})$.
Then, by \cite[Proposition~8.2]{CU}, for a compact subset $\omega\subset \mathcal{F}_N$, there exist constants $C_1(\omega)$ and $C_2(\omega)$ such that, for any $z_0\in \omega$
\begin{multline*}
\left| \frac{1}{\sigma_1(n)}T_n f(z_0)- \int_{\mathcal{F}_N} f(z) \; d\mu(z) \right|
\\
\leq C_1(\omega) \left\| \frac{1}{\sigma_1(n)}T_n f-\int_{\mathcal{F}_N} f(z) \; d\mu(z) \right\|_2
+ C_2(\omega) \left\| \frac{1}{\sigma_1(n)}T_n (\Delta^m f)-\int_{\mathcal{F}_N} (\Delta^m f)(z) \; d\mu(z) \right\|_2.
\end{multline*}
So, we have the following corollary.

\begin{cor} \label{pointwise_convergence}
Assume that $f, \Delta^2 f\in L^2(\Gamma_0(N) \backslash \mathbb{H})$.
Take a compact $\omega\subset \Gamma_0(N)\backslash \mathbb{H}$ and a positive number $\epsilon$.
Then, there exists a constant  $C_{\omega, \epsilon}$ depending on  $\omega$ and $\epsilon$,
such that, for a positive integer $n$ prime to $N$, for any $z_0\in \omega$,
\[
\left| \frac{1}{\sigma_1(n)}T_n f(z_0)- \int_{\mathcal{F}_N} f(z) \; d\mu(z) \right|
\leq  C_{\omega, \epsilon}  n^{-\frac{1}{2}+\theta+\epsilon}  \max\{\|f\|_2, \|\Delta^2f\|_2\}.
\]
\end{cor}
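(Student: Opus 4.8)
The plan is to derive the pointwise bound directly from the two ingredients already in place: the comparison inequality of \cite[Proposition~8.2]{CU} and the quantitative $L^2$-estimate of Theorem~\ref{L2_convergence}. First I would specialize the displayed inequality preceding the statement to $m=2$. For every $z_0\in\omega$ it controls the pointwise defect of $\frac{1}{\sigma_1(n)}T_n f$ at $z_0$ by a weighted sum of two global $L^2$-defects, namely $C_1(\omega)$ times that of $f$ and $C_2(\omega)$ times that of $\Delta^2 f$, where the constants $C_1(\omega),C_2(\omega)$ depend only on the compact set $\omega$. This step reduces the desired pointwise statement to two applications of a single $L^2$-bound.

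Next I would estimate each of the two $L^2$-norms by Theorem~\ref{L2_convergence}. Applied to $f\in L^2(\Gamma_0(N)\backslash\HH)$ it yields
\[
\left\| \tfrac{1}{\sigma_1(n)}T_n f - \int_{\mathcal{F}_N} f\,d\mu \right\|_2 \leq c_\epsilon\, n^{-\frac12+\theta+\epsilon}\|f\|_2 .
\]
The key observation is that the second term is handled in exactly the same way: since $\Delta^2 f\in L^2(\Gamma_0(N)\backslash\HH)$ by hypothesis, I may regard $\Delta^2 f$ as an arbitrary element of $L^2$ and apply the same theorem to it, obtaining
\[
\left\| \tfrac{1}{\sigma_1(n)}T_n(\Delta^2 f) - \int_{\mathcal{F}_N} \Delta^2 f\,d\mu \right\|_2 \leq c_\epsilon\, n^{-\frac12+\theta+\epsilon}\|\Delta^2 f\|_2 .
\]
In particular, no commutation of $T_n$ with the Laplace--Beltrami operator is required.

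Substituting both estimates into the comparison inequality and factoring out the common decay $n^{-\frac12+\theta+\epsilon}$, the pointwise defect at any $z_0\in\omega$ is at most
\[
c_\epsilon\, n^{-\frac12+\theta+\epsilon}\bigl(C_1(\omega)\|f\|_2 + C_2(\omega)\|\Delta^2 f\|_2\bigr) \leq C_{\omega,\epsilon}\, n^{-\frac12+\theta+\epsilon}\max\{\|f\|_2,\|\Delta^2 f\|_2\},
\]
with $C_{\omega,\epsilon}:=c_\epsilon\bigl(C_1(\omega)+C_2(\omega)\bigr)$, which is the asserted inequality.

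Since both ingredients are already available, the argument is essentially a combination rather than a new computation, so the points that deserve care are bookkeeping ones. I would check that the regularity needed to invoke \cite[Proposition~8.2]{CU} is precisely the hypothesis $f,\Delta^2 f\in L^2$, that the constants $C_1(\omega),C_2(\omega)$ are genuinely uniform over $z_0\in\omega$ and independent of $n$, and that the coprimality condition $\gcd(n,N)=1$ is retained so that Theorem~\ref{L2_convergence} applies at each step. The only (mild) subtlety is ensuring that applying Theorem~\ref{L2_convergence} to $\Delta^2 f$ is legitimate; this holds simply because that theorem is stated for an arbitrary element of $L^2(\Gamma_0(N)\backslash\HH)$, so the absence of an explicit Hecke--Laplacian commutation causes no difficulty.
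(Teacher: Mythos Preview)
Your proposal is correct and follows exactly the approach the paper intends: the paper states the comparison inequality from \cite[Proposition~8.2]{CU} immediately before the corollary and then writes ``So, we have the following corollary,'' leaving the combination with Theorem~\ref{L2_convergence} implicit. You have filled in precisely those details, including the specialization to $m=2$ and the application of Theorem~\ref{L2_convergence} separately to $f$ and to $\Delta^2 f$.
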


\section{Proofs}\label{s:proof}
Let $M^{Eis}_k(\Gamma_0(N))$ be the space of modular forms orthogonal to all the cusp forms of weight $k$ on $\Gamma_0(N)$,
which  is called the Eisenstein space of weight $k$ on $\Gamma_0(N)$. In the following lemma, we prove that if $N$ is square-free, then, for a positive integer  $n$  prime to $N$, the $n$th coefficient of a modular form in $M^{Eis}_2(\Gamma_0(N))$ is a multiple of  $\sigma_1(n)$.
Recall the notations $D(N)$, $d_j$, and $A_N$ from Section \ref{section1}.
Now, we prove the following lemma related to properties for modular forms in an Eisenstein space.

\begin{lem} \label{basis}
Suppose that $\mathcal{E}_2(z):=\sum_{n=0}^{\infty} b(n)q^n$ is a modular form in $M^{Eis}_2(\Gamma_0(N))$, and that $N$ is square free.
Then, the following statements are true.
\begin{enumerate}
\item There exists a constant $c$ such that for every positive integer $n$ prime to $N$,
\[
b(n)=c\sigma_1(n).
\]

\item Assume that the constant term of $\mathcal{E}_2(z)$ at cusp $1/d_i$ is $c_{d_i}$. Let $A_{j}$ be the matrix obtained from $A$ by replacing the $j$th column of $A$ with a column matrix whose $i$th component is $c_{d_i}$. Then
    \[
    \mathcal{E}_2(z)= \sum_{1 \leq j \leq D(N)-1 }\frac{\det(A_{j})}{\det(A_N)}(E_2(z)-d_jE_2(d_jz)).
    \]
\end{enumerate}
\end{lem}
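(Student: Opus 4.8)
The plan is to realize $M^{Eis}_2(\Gamma_0(N))$ through an explicit basis and to read off both statements from the constant-term data of the basis elements. For squarefree $N$ the number of cusps is $D(N)$, so $\dim M^{Eis}_2(\Gamma_0(N))=D(N)-1$, and I would take as basis the forms $\phi_d(z):=E_2(z)-dE_2(dz)$ for the divisors $d\mid N$ with $d>1$. Each $\phi_d$ is genuinely modular of weight $2$: writing $E_2^\ast(z):=E_2(z)-\tfrac{3}{\pi\,\mathrm{Im}(z)}$, which is weight-$2$ invariant for $\mathrm{SL}_2(\ZZ)$ by the transformation law for $E_2$ recalled in Section~\ref{s:preliminaries}, the two non-holomorphic terms in $E_2^\ast(z)-dE_2^\ast(dz)$ cancel, so $\phi_d=E_2^\ast(z)-dE_2^\ast(dz)$ is holomorphic and $\Gamma_0(N)$-invariant. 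There are $D(N)-1$ such forms, matching the dimension; their independence will follow from the constant-term computation below.

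For part (1), I would compute the $n$th Fourier coefficient of $\phi_d$ at $i\infty$ directly from $E_2=1-24\sum_{m\ge1}\sigma_1(m)q^m$. For $n$ prime to $N$ and $d\mid N$ with $d>1$ one has $d\nmid n$, so $E_2(dz)$ contributes nothing to the $q^n$-coefficient and the $q^n$-coefficient of $\phi_d$ equals $-24\sigma_1(n)$, independently of $d$. Hence if $\mathcal{E}_2=\sum_{d>1}\alpha_d\phi_d$, then $b(n)=-24\bigl(\sum_{d>1}\alpha_d\bigr)\sigma_1(n)=c\,\sigma_1(n)$ with $c$ independent of $n$, which is (1).

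The core of part (2) is the constant term of $\phi_d$ at the cusp $1/c$ for $c\mid N$. Choosing $\sigma_c=\sm 1&0\\ c&1\esm$, I would factor $\sm d&0\\ c&1\esm=\gamma\,\sm g&-\beta\\ 0&d/g\esm$ with $g=\gcd(c,d)$, $\gamma\in\mathrm{SL}_2(\ZZ)$, and $\beta\in\ZZ$ from a B\'ezout relation, and then use $E_2^\ast(\gamma w)=(\text{automorphy factor})^2E_2^\ast(w)$ together with the cocycle identity for the automorphy factor to obtain $(E_2^\ast(d\,\cdot)|_2\sigma_c)(z)=(d/g)^{-2}E_2^\ast\bigl(\tfrac{gz-\beta}{d/g}\bigr)$. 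Letting $\mathrm{Im}(z)\to\infty$ gives constant term $g^2/d^2$ for $E_2(dz)$ and $1$ for $E_2$, so the constant term of $\phi_d$ at $1/c$ equals $1-g^2/d=1-\gcd(c,d)^2/d$. Writing $\mathcal{E}_2=\sum_j\alpha_j\phi_{d_j}$ and equating constant terms at the cusps $1/d_i$ with the given data $c_{d_i}$ turns this into the linear system $A_N\,\alpha=(c_{d_i})_i$, whose coefficient matrix is exactly $A_N$ once the cusp index $1/d_i$ is matched to the basis index $d_j$; Cramer's rule then yields $\alpha_j=\det(A_{j})/\det(A_N)$ and hence the stated expansion.

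To finish I must know $\det(A_N)\neq0$. I would deduce this from the residue formula (\ref{f_residue}): $\mathcal{E}_2\,dz$ is a meromorphic $1$-form on $X_0(N)$, holomorphic on $Y_0(N)$, whose residue at each cusp $t$ is $\tfrac{1}{2\pi i}\alpha_t a_t(0)$, so the residue theorem gives the single linear relation $\sum_{t\in\mathcal{C}_N}\alpha_t a_t(0)=0$ among the constant terms at all $D(N)$ cusps. Consequently an element of $M^{Eis}_2(\Gamma_0(N))$ whose constant terms at the $D(N)-1$ cusps $1/d_i$ all vanish has vanishing constant term at $i\infty$ as well, hence is a cusp form lying in $M^{Eis}_2(\Gamma_0(N))=\{0\}$; thus the constant-term map is an isomorphism onto $\CC^{D(N)-1}$, which forces $\det(A_N)\neq0$ and simultaneously confirms that the $\phi_d$ form a basis. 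I expect the main obstacle to be the constant-term computation at the cusps, namely the matrix factorization and the careful bookkeeping of automorphy factors needed to produce precisely $1-\gcd(c,d)^2/d$, together with reconciling the cusp-indexing with the basis-indexing so that the coefficient matrix is literally $A_N$.
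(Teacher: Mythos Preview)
Your overall strategy coincides with the paper's: take the forms $\phi_d(z)=E_2(z)-dE_2(dz)$ for $d\mid N$, $d>1$, as a basis of $M^{Eis}_2(\Gamma_0(N))$, read off part~(1) from the Fourier expansion of $E_2$, compute the constant term of $\phi_{d_j}$ at the cusp $1/d_i$ via the matrix factorization of $\sm d_j&0\\0&1\esm\sm1&0\\d_i&1\esm$ to obtain $1-\gcd(d_i,d_j)^2/d_j$, and then apply Cramer's rule to the resulting linear system. Your constant-term computation and the resulting system $A_N\alpha=(c_{d_i})_i$ match the paper exactly.

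The one substantive difference is how linear independence of the $\phi_d$ (equivalently, $\det(A_N)\neq0$) is handled. The paper argues directly: if $\sum_j a_j\phi_{d_j}=0$, compare $n$th Fourier coefficients for $n$ prime to $N$ to get $\sum_j a_j=0$, then compare the $d_{j_0}$th coefficient for the smallest $d_{j_0}$ with $a_{j_0}\neq0$ to reach a contradiction. You instead invoke the residue theorem to show the constant-term map $T:M^{Eis}_2\to\CC^{D(N)-1}$ is injective, hence an isomorphism by the dimension count, and then assert this ``forces $\det(A_N)\neq0$ and simultaneously confirms that the $\phi_d$ form a basis.'' That last inference is circular: the columns of $A_N$ are $T(\phi_{d_j})$, so $\det(A_N)\neq0$ is \emph{equivalent} (via the injectivity of $T$) to the independence of the $\phi_{d_j}$, not a consequence of $T$ being an isomorphism alone. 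You still need an independent argument---either the paper's Fourier-coefficient comparison or a direct evaluation of $\det(A_N)$---to close this step. Once you insert that, your residue-theorem observation is actually a nice complement: the paper applies Cramer's rule without explicitly checking $\det(A_N)\neq0$, and your injectivity argument together with the independence of the $\phi_d$ is exactly what supplies it.
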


\begin{proof}
(1) We claim that there is a basis of $M^{Eis}_2(\Gamma_0(N))$ consisting of modular forms $E_2(z)-d E_2(dz)$, where $d\neq 1$ are the divisors of $N$. Assume that the claim is true.
Then, $\mathcal{E}_2(z)$ can be expressed as a linear combination of $E_2(z)-d_jE_2(d_jz)$ having the form
\[
\mathcal{E}_2(z)=\sum_{1 \leq j \leq D(N)-1} a_j (E_2(z)-d_jE_2(d_jz)).
\]
Recall that $E_2$ has the Fourier expansion of the form
\begin{equation}\label{e:E2_fourier}
E_2(z) = 1-24\sum_{n=1}^\infty \sigma_1(n) q^n.
\end{equation}
Then,  the $n$th coefficient of $\mathcal{E}_2(z)$ is given by
\[
-24\sum_{1 \leq j \leq D(N)-1} a_j (\sigma_1(n) - d_j\sigma_1(n/d_j))
\]
for $n>0$,
and $a_j$ does not depend on $n$. Here, $\sigma_1(n/d) = 0$ if $n$ is not divisible by $d$.
Thus, we have the proof of the lemma.

Now, we prove the claim. Suppose that
\[
\sum_{1 \leq j \leq D(N)-1} a_j (E_2(z)-d_jE_2(d_jz))=0,
\]
where $a_j$ are complex numbers.
We assume that complex numbers $a_j$ are not all zero.
Then, we have
\[
\sum_{1 \leq j \leq D(N)-1} a_j E_2(z)
=\sum_{1 \leq j \leq D(N)-1} a_j d_jE_2(d_jz).
\]
Comparing the $n$th coefficients of the forms on both sides for $n$ prime to $N$, we have
\[
\sum_{1 \leq j \leq D(N)-1}a_j E_2(z)=\sum_{1 \leq j \leq D(N)-1} a_j d_j E_2(d_jz)=0.
\]
Take the smallest positive integer $d_{j_0}|N$ such that $a_{j_0} \neq 0$.
Then, we have
\[
-a_{j_0}d_{j_0} E_2(d_{j_0} z)=\sum_{1 \leq j \leq D(N)-1} a_j d_jE_2(d_jz)-a_{j_0}d_{j_0} E_2(d_{j_0} z).
\]
Comparing the $d_{j_0}$th coefficients of the forms on both sides, we have $a_{j_0}=0$.
This is a contradiction.
Therefore, the modular forms $E_2(z)-dE_2(dz)$, $d|N$ and $d\neq1$, are linearly independent.

Let us note
\[
\dim_{\mathbb{C}}M^{Eis}_2(\Gamma_0(N))=D(N)-1.
\]
since $N$ is square free.
Thus, $$\{ (E_2(z)-dE_2(dz) \; : \; d\mid N\text{ and } d \neq 1\}$$ is a basis of $M^{Eis}_2(\Gamma_0(N))$. This completes the proof of the claim.

(2) From the proof of (1), we may assume that
\[
\mathcal{E}_2(z)=\sum_{1 \leq j \leq D(N)-1} a_j (E_2(z)-d_j E_2(d_j z)).
\]
Let us note that
$E_2(z)-\frac{3}{\pi \mathrm{Im}(z)}$ is a non-holomorphic modular form of weight $2$ on $\mathrm{SL}_2(\mathbb{Z})$.
By direct computation, there are $\gamma \in \mathrm{SL}_2(\mathbb{Z})$ and $\mu_j \in \mathbb{Z}$ such that
\[
\left(
    \begin{array}{cc}
      d_j & 0 \\
      0 & 1 \\
    \end{array}
  \right)
  \left(
    \begin{array}{cc}
      1 & 0 \\
      d_i & 1 \\
    \end{array}
  \right)
= \gamma
\left(
    \begin{array}{cc}
      1 & \mu_j \\
      0 &  d_j/\gcd(d_j,d_i) \\
    \end{array}
  \right)
  \left(
    \begin{array}{cc}
      \gcd(d_j,d_i) & 0 \\
      0 & 1 \\
    \end{array}
  \right)
\]
Thus,
\[
(E_2(z)-d_jE_2(d_jz))|_2 \sm 1 & 0 \\ d_i & 1 \esm = E_2(z)-\frac{\gcd(d_j,d_i)^2}{d_j}E_2\left(\frac{\gcd(d_j,d_i)^2}{d_j}z+\frac{\mu_j\gcd(d_j,d_i)}{d_j} \right).
\]
This implies that $a_j$ are the solution of the system
\[
c_{d_i}=\sum_{ 1 \leq j \leq D(N)-1} \left(1-\frac{\gcd(d_j,d_i)^2}{d_j}\right)a_j
\]
for $ 1 \leq i \leq D(N)-1$. Thus, the Cramer's rule completes the proof.
\end{proof}

Now, we prove Theorem \ref{modularity}.

\begin{proof}
Note that
\[
\left\{E_2(z) - dE_2(dz)\ |\ d\mid N,\ d\neq 1\right\}
\]
forms a basis of $ M_2^{Eis}(\Gamma_0(N))$ by the proof of Lemma \ref{basis}.
Therefore, we can take a modular form $\mathcal{E}_2  \in M_2^{Eis}(\Gamma_0(N))$
 such that  the constant term of $\mathcal{E}_2$ at each cusp except cusps equivalent to $i \infty$ is the same as that of $f_{\theta}$.
Suppose that  $\mathcal{E}_2$ has the Fourier expansion of the form
\[
\mathcal{E}_2(z)=\sum_{n=0}^{\infty} b(n)q^n.
\]

Note that, by (\ref{f_residue}) and (\ref{f_theta_residue}), we have
\[
\frac{2\pi i}{e_\tau} \mathrm{Res}_\tau f_\theta = \nu_\tau^{(N)}(f)
\]
for $\tau\in \mathcal{F}_N$.
Thus, from \cite[Lemma~3.1]{C2}, we obtain
\begin{equation} \label{small_j}
(f_{\theta}-\mathcal{E}_2, \xi_0(j_{N,m}))_{reg} = \beta_{N,m}(a_{\theta}(0)-b(0))+
a_\theta(m)-b(m)+\sum_{\tau\in \mathcal{F}_N} \nu_\tau^{(N)}(f)j_{N,m}(\tau),
\end{equation}
where $a_\theta(m)$ is the $m$th Fourier coefficient of $f_{\theta}$ and
$\xi_0$ is a differential operator defined by
\[
\xi_0(f)(z) := 2i\overline{\frac{\partial}{\partial\bar{z}} f(z)}.
\]
By the same argument in the proof of \cite[Lemma~3.1]{C2}, we have
\begin{equation} \label{large_j}
(f_{\theta}-\mathcal{E}_2, \xi_0(J_{N,m}))_{reg} = a_\theta(m)-b(m)+\sum_{\tau\in \mathcal{F}_N} \nu_\tau^{(N)}(f)J_{N,m}(\tau).
\end{equation}
Note that $\xi_0(j_{N,m}) = \xi_0(J_{N,m})$ since $J_{N,m}(z) = j_{N,m}(z)-\beta_{N,m}$.
Therefore, from (\ref{small_j}) and (\ref{large_j}), we have
\begin{equation} \label{constant}
a_\theta(0)-b(0)=\frac{1}{\beta_{N,m}} \left(\sum_{\tau\in \mathcal{F}_N} \nu_\tau^{(N)}(f)J_{N,m}(\tau)-\sum_{\tau\in \mathcal{F}_N} \nu_\tau^{(N)}(f)j_{N,m}(\tau)\right)=-\sum_{\tau\in \mathcal{F}_N} \nu_\tau^{(N)}(f).
\end{equation}

Proposition \ref{regpetersson} implies that
\[
(f_{\theta}-\mathcal{E}_2, \xi_0(j_{N,m}))_{reg}=0.
\]
Therefore, from (\ref{large_j}), we have
\begin{equation} \label{nonconstant}
a_\theta(m)-b(m)=-\sum_{\tau\in \mathcal{F}_N} \nu_\tau^{(N)}(f)J_{N,m}(\tau)
\end{equation}
for every positive integer $m$.
Thus, from (\ref{constant}) and (\ref{nonconstant}), we obtain
\begin{equation} \label{result}
 f_{\theta}(z)-\mathcal{E}_2(z) = -\sum_{\tau\in \mathcal{F}_N} \nu_\tau^{(N)}(f)-\sum_{m=1}^\infty \biggl( \sum_{\tau\in \mathcal{F}_N} \nu_\tau^{(N)}(f)J_{N,m}(\tau)\biggr) q^m.
\end{equation}

 By \eqref{productexpansion} and the Fourier expansion of $E_2$ given in \eqref{e:E2_fourier},
$f_{\theta}$ has the Fourier expansion of the form
\begin{equation} \label{f_theta_expansion}
f_{\theta}(z)=h_\infty+\sum_{n=1}^{\infty}\sum_{d\mid n}d c(d)q^n-\frac{k}{12}+2k\sum_{n=1}^{\infty}\sigma_1(n)q^n.
\end{equation}
 Let us note that the constant term of $f_{\theta}$ at cusp $t$ is $\nu_t^{(N)}(f)-k/12$.
Suppose that $m$ is prime to $N$.
Then, Lemma \ref{basis} implies that
\begin{equation} \label{b_m}
b(m) = -24\left( \sum_{1 \leq j \leq D(N)-1 }\frac{\det(A_{f,j})}{\det(A_N)}\right) \sigma_1(m).
\end{equation}
Here, $A_{f,j}$ is a matrix obtained from $A_N$ by replacing the $j$th column of $A_N$ with a column matrix whose $i$th component is $\frac{\nu_{1/d_i}^{(N)}(f)}{\alpha_{1/d_i}}-\frac{k}{12}$.
Let us note that if $\gcd(m,N)=1$, then $J_{N,m}=J_{N,1}|T_m$.
Therefore, by (\ref{nonconstant}), (\ref{f_theta_expansion}), and (\ref{b_m}), we have
\begin{eqnarray*}
-J_{N,1}(T_m . D_f) &=& -J_{N,m}(D_f) =  -\sum_{\tau\in \mathcal{F}_N} \nu_\tau^{(N)}(f)J_{N,m}(\tau) = a_{\theta}(m) - b(m)\\
 &=& \sum_{d\mid m} dc(d) +24 \left( \sum_{1 \leq j \leq D(N)-1 }\frac{\det(A_{f,j})}{\det(A_N)} +  \frac{k}{12}\right)\sigma_1(m).
\end{eqnarray*}
\end{proof}

To prove Theorem \ref{main2}, we follow the argument of the proof of \cite[Proposition~3]{D}.
We fix $\epsilon>0$.
Let $\psi_\epsilon : \RR_{>0} \to \RR$ be a $C^\infty$ function with $0\leq \psi_\epsilon(y)\leq 1$ for all $y\in \RR_{>0}$ and
\begin{equation*}
\psi_\epsilon(y) = \begin{cases}0, & \text{ if } y\leq 1, \\
1, & \text{ if } y> 1+\epsilon. \end{cases}
\end{equation*}
For a positive integer $n$,
consider the Poincar\'e series defined by
\begin{equation}\label{e:Pm}
P_{n,\epsilon}(z) := \sum_{\gamma\in \Gamma_0(N)_\infty \backslash \Gamma_0(N)} \psi_\epsilon(\mathrm{Im}(\gamma z)) e(-n(\gamma z)).
\end{equation}
From this, we obtain the following proposition.

%
%

\begin{prop} \label{dis_J}
Let $\theta$ be given as in Section \ref{s:heckepts}.
Fix  $n\in \ZZ_{\geq 1}$, $\epsilon>0$, and $z_0\in\HH$.
For any positive integer $m$ prime to $N$ and any $\epsilon'>0$, we have
\begin{multline*}
\left|\frac{1}{\sigma_1(m)} \left\{ \sum_{\substack{ad=m, \\ b\pmod{d}}}  J_{N,n}\left(\frac{az_0+b}{d}\right) - \sum_{\substack{ad=m, \\ b\pmod{d}}} P_{n,\epsilon}\left(\frac{az_0+b}{d}\right) \right\}
- \lim_{\epsilon''\to0}
\int_{\mathcal{F}_N(\epsilon'')} J_{N,n}(z) \; d\mu(z) \right|\\
\leq  C_{z_0, \epsilon'} m^{-\frac12+\theta+\epsilon'} \max\{\| F_{n,\epsilon}\|_2, \|\Delta^2 F_{n, \epsilon}\|_2\},
\end{multline*}
where
$F_{n,\epsilon} := J_{N,n}-P_{n,\epsilon}$
and  $C_{z_0, \epsilon'}$ is the constant given as in Corollary \ref{pointwise_convergence}.
\end{prop}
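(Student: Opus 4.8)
The plan is to recognize the two inner sums as the value of the (unnormalized) weight-$0$ Hecke operator $T_m$ and then to invoke the pointwise equidistribution estimate of Corollary~\ref{pointwise_convergence}. Since $\frac{az_0+b}{d}=\gamma z_0$ for $\gamma=\sm a&b\\0&d\esm\in T(m)$ and weight-$0$ forms carry no automorphy factor, for any function $g$ on $\HH$ one has $\sum_{ad=m,\,0\le b<d}g\!\left(\frac{az_0+b}{d}\right)=T_m g(z_0)$, while $|T(m)|=\sigma_1(m)$. Writing $F_{n,\epsilon}=J_{N,n}-P_{n,\epsilon}$, linearity turns the bracketed difference into $T_mF_{n,\epsilon}(z_0)$, so the quantity to be estimated is exactly
\[
\left|\frac{1}{\sigma_1(m)}T_mF_{n,\epsilon}(z_0)-\int_{\mathcal{F}_N}F_{n,\epsilon}(z)\,d\mu(z)\right|,
\]
provided we can (i) verify the hypotheses $F_{n,\epsilon},\Delta^2F_{n,\epsilon}\in L^2(\Gamma_0(N)\backslash\HH)$ of Corollary~\ref{pointwise_convergence}, and (ii) identify $\int_{\mathcal{F}_N}F_{n,\epsilon}\,d\mu$ with the regularized integral $\lim_{\epsilon''\to0}\int_{\mathcal{F}_N(\epsilon'')}J_{N,n}\,d\mu$ appearing in the statement.

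For (i) the key point is that $P_{n,\epsilon}$ is built precisely so that its growth at the cusp $i\infty$ reproduces the principal part of $J_{N,n}$. Concretely, the identity coset of $P_{n,\epsilon}$ contributes $\psi_\epsilon(y)e(-nz)$, which equals $e(-nz)\sim q^{-n}$ for $y>1+\epsilon$, while the identity coset of the defining series $F_{N,n}(z,s)$ at $s=1$ contributes $\sinh(2\pi ny)e(-nx)$, whose growing part is of the same exponential type $q^{-n}$; all non-identity cosets decay at $i\infty$. Thus, after subtracting the constant $\beta_{N,n}$ (so that $J_{N,n}$ has vanishing constant term at $i\infty$), the exponentially growing terms cancel and only bounded lower-order terms remain near $i\infty$. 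At every other cusp $\tau$ no element of $\Gamma_0(N)$ carries $\tau$ to $i\infty$, so both $J_{N,n}$ and $P_{n,\epsilon}$ stay bounded there; hence $F_{n,\epsilon}$ is bounded on the finite-volume quotient and lies in $L^2$. Carefully matching the leading coefficients of $q^{-n}$ from the $s\to1$ asymptotics of the $I$-Bessel function is the main calculation here. For the second $L^2$ condition, note that $j_{N,n}$ (hence $J_{N,n}$) is harmonic of weight $0$, so $\Delta J_{N,n}=0$ and therefore $\Delta^2F_{n,\epsilon}=-\Delta^2P_{n,\epsilon}$. Since $e(-nz)$ is annihilated by $\Delta$, a direct computation gives $\Delta\bigl(\psi_\epsilon(y)e(-nz)\bigr)=-y^2\bigl(\psi_\epsilon''(y)+4\pi n\psi_\epsilon'(y)\bigr)e(-nz)$, which is supported in the compact band $1\le y\le1+\epsilon$; as $\Delta$ commutes with the $\Gamma_0(N)$-action, $\Delta^2P_{n,\epsilon}$ is a Poincar\'e series whose seed is bounded and supported in a compact band of $y$-values, hence lies in $L^2$.

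With (i) in hand, Corollary~\ref{pointwise_convergence} applied to $f=F_{n,\epsilon}$, with $\omega$ a fixed compact neighborhood of the image of $z_0$ and parameter $\epsilon'$, yields the displayed bound with $C_{z_0,\epsilon'}=C_{\omega,\epsilon'}$. It remains to carry out (ii). Since $F_{n,\epsilon}$ is bounded, dominated convergence gives $\int_{\mathcal{F}_N}F_{n,\epsilon}\,d\mu=\lim_{\epsilon''\to0}\int_{\mathcal{F}_N(\epsilon'')}F_{n,\epsilon}\,d\mu$, so it suffices to show $\lim_{\epsilon''\to0}\int_{\mathcal{F}_N(\epsilon'')}P_{n,\epsilon}\,d\mu=0$. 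This is the regularized integral of the Poincar\'e series $P_{n,\epsilon}$ against the constant function; unfolding against $\Gamma_0(N)_\infty\backslash\Gamma_0(N)$ reduces it to $\int_0^\infty\psi_\epsilon(y)\bigl(\int_0^1 e(-nz)\,dx\bigr)\frac{dy}{y^2}$, and the inner integral $\int_0^1 e(-nx)\,dx$ vanishes for every $n\ge1$. Equivalently, the constant term of $P_{n,\epsilon}$ at $i\infty$ vanishes for large $y$ (only the identity coset survives there and its $x$-average is $0$) while $P_{n,\epsilon}$ decays at the other cusps, so the truncated integral is independent of $\epsilon''$ for small $\epsilon''$ and equals its regularized value $0$. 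Combining, $\int_{\mathcal{F}_N}F_{n,\epsilon}\,d\mu=\lim_{\epsilon''\to0}\int_{\mathcal{F}_N(\epsilon'')}J_{N,n}\,d\mu$, which is exactly the main term in the statement.

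The main obstacle I anticipate is the cusp analysis underlying (i): one must pin down the exact exponential growth of $J_{N,n}$ at $i\infty$ from the continuation of $F_{N,n}(z,s)$ and match it against the seed of $P_{n,\epsilon}$, so that the difference is genuinely bounded rather than merely of lower exponential order. The regularized-integral computation in (ii), while delicate at the cusps, follows the template of \cite[Proposition~3]{D} once the unfolding is justified.
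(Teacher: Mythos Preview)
Your approach is correct and matches the paper's: rewrite the bracketed difference as $T_mF_{n,\epsilon}(z_0)$, apply Corollary~\ref{pointwise_convergence}, and identify $\int_{\mathcal{F}_N}F_{n,\epsilon}\,d\mu$ with the regularized integral of $J_{N,n}$ by unfolding $P_{n,\epsilon}$ (the $x$-integral vanishes since $n\ge1$). The only difference lies in your step~(i): the paper dispatches the $L^2$ membership of $F_{n,\epsilon}$ by citing \cite[Theorem~2.1]{C2} for the Fourier expansion of $j_{N,n}$ rather than arguing directly, and your flagged concern about matching the leading coefficient of $q^{-n}$ is exactly what that citation supplies; your handling of $\Delta^2F_{n,\epsilon}$ via harmonicity of $J_{N,n}$ and compact support of the seed of $\Delta P_{n,\epsilon}$ is in fact more explicit than the paper, which leaves this verification implicit.
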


\begin{proof}
For a positive integer $n$, let $P_{n,\epsilon}$ be the Poincar\'e series as in \eqref{e:Pm}.
From \cite[Theorem 2.1]{C2}, it follows that  $F_{n,\epsilon} \in L^2(\Gamma_0(N) \backslash \mathbb{H})$ for a fixed $n \geq 1$. 

%
Recall that for $\phi\in L^2(\Gamma_0(N)\backslash \mathbb{H})$ and $m\geq 1$ with $\gcd(m,N)=1$, the normalized Hecke operator $T_m$ can be represented by
\begin{equation*}
T_m \phi(z) =  \sum_{\gamma\in T(m)} \phi\left(\gamma z\right).
\end{equation*}
%
%
By Corollary \ref{pointwise_convergence}, we find that for any $\epsilon'>0$ and $m$
\begin{equation} \label{F_m1}
\left| \frac{1}{\sigma_1(m)}(T_m F_{n,\epsilon}) (z_0) - \int_{\mathcal{F}_N} F_{n,\epsilon}(z) \; d\mu(z)\right|
\leq C_{z_0, \epsilon'} m^{-\frac{1}{2}+\epsilon'+\theta} \max\{\|F_{n,\epsilon} \|_2, \|\Delta^2 F_{n, \epsilon}\|_2\}.
\end{equation}
%

For $z_0\in \HH$, we have
\begin{equation} \label{F_m2}
\frac{1}{\sigma_1(m)} T_m F_{n,\epsilon}(z_0)
= \frac{1}{\sigma_1(m)} \left\{ \sum_{\substack{ad=m, \\ b\pmod{d}}}  J_{N,n}\left(\frac{az_0+b}{d}\right) - \sum_{\substack{ad=m, \\ b\pmod{d}}} P_{n,\epsilon}\left(\frac{az_0+b}{d}\right) \right\}.
\end{equation}
Note that
\begin{equation*}
\lim_{\epsilon''\to0} \int_{\mathcal{F}_N(\epsilon'')} P_{n,\epsilon}(z) \; \frac{dx\; dy}{y^2}
= \int_{\Gamma_0(N)_\infty \backslash \HH} \psi_\epsilon(y) e^{-2\pi inz} \frac{dxdy}{y^2}
= \int_0^\infty \psi_\epsilon(y) e^{2\pi ny} \; \frac{dy}{y^2} \cdot \int_{0}^1 e^{2\pi inx} \; dx
=0
\end{equation*}
for every positive integer $n$.
So,  we get
\begin{equation} \label{F_m3}
\int_{\mathcal{F}_N} F_{n,\epsilon}(z) \; d\mu(z) = \lim_{\epsilon''\to0}\int_{\mathcal{F}_N(\epsilon'')} J_{N,n}(z) \; d\mu(z).
\end{equation}
If we combine (\ref{F_m1}), (\ref{F_m2}), and (\ref{F_m3}), then we get the desired result.
\end{proof}

We define
\[
Q_{1,\epsilon}(z) := \psi_{\epsilon}(\mathrm{Im}(\tilde{z}))e(-\tilde{z})
\]
for $\epsilon>0$. Then, we obtain the following proposition.

\begin{prop}  \label{first_part}
For any $m$ with $\gcd(m, N)=1$,
we obtain
\begin{multline} \label{temp1}
\left| \frac{1} {\sigma_1(m)} \left( J_{N,1}(T_m . D_f) - Q_{1,\epsilon}(T_m . D_f) \right) -  h_f \lim_{\epsilon''\to0}
\int_{\mathcal{F}_N(\epsilon'')} J_{N,1}(z)d\mu(z)\right|
\\ \leq
H_f C(f, \epsilon') m^{-\frac12 + \theta+\epsilon'}\max\{\| F_{1,\epsilon} \|_2, \|\Delta^2 F_{1, \epsilon}\|_2\},
\end{multline}
where  $h_f$ denotes  the sum of the orders of zero or pole of $f$ at $Q_{\tau}$ on $Y_0(N)$.
\end{prop}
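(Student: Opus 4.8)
The plan is to reduce the claim to a weighted sum of the pointwise estimate already recorded in Proposition~\ref{dis_J}, once we have compared the smooth Poincaré series $P_{1,\epsilon}$ with the single-term function $Q_{1,\epsilon}$. The crucial preliminary step is to prove the identity
\[
P_{1,\epsilon}(z) = Q_{1,\epsilon}(z) \qquad (z\in\HH).
\]
Since $\psi_\epsilon$ is supported on $\{\mathrm{Im}>1\}$, a summand in $P_{1,\epsilon}(z)=\sum_{\gamma\in\Gamma_0(N)_\infty\backslash\Gamma_0(N)}\psi_\epsilon(\mathrm{Im}(\gamma z))e(-\gamma z)$ can be nonzero only when $\mathrm{Im}(\gamma z)>1$. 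The reduction estimate $\mathrm{Im}(\gamma z)=\mathrm{Im}(z)/|cz+d|^2\le 1/\mathrm{Im}(z)$, valid whenever the lower-left entry $c$ of $\gamma$ is nonzero, shows that for fixed $z$ the cosets with $\mathrm{Im}(\gamma z)>1$ form at most one class in $\Gamma_0(N)_\infty\backslash\Gamma_0(N)$; as $\Gamma_0(N)$ has width $1$ at $\infty$, both $\psi_\epsilon(\mathrm{Im}(\gamma z))$ and $e(-\gamma z)$ are constant on that class. When such a class exists, an integer translation normalizes its representative $\gamma z$ to lie in $\mathcal{F}_1\subset\mathcal{F}_N$, and uniqueness of the $\Gamma_0(N)$-representative forces $\gamma z=\tilde z$, so that $P_{1,\epsilon}(z)=\psi_\epsilon(\mathrm{Im}(\tilde z))e(-\tilde z)=Q_{1,\epsilon}(z)$; when no such class exists, $\mathrm{Im}(\tilde z)\le1$ and both sides vanish.

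Granting this identity, I would write $D_f=\sum_{\tau\in S}\nu_\tau^{(N)}(f)Q_\tau$, where $S$ is the finite support of the divisor on $Y_0(N)$, so that $h_f=\sum_{\tau\in S}\nu_\tau^{(N)}(f)$. For each $\tau\in S$, the Hecke action on divisors together with $J_{N,m}=J_{N,1}|T_m$ gives
\[
J_{N,1}(T_m.\tau)=\sum_{\gamma\in T(m)}J_{N,1}(\gamma\tau)=T_mJ_{N,1}(\tau),
\qquad
Q_{1,\epsilon}(T_m.\tau)=\sum_{\gamma\in T(m)}Q_{1,\epsilon}(\gamma\tau)=T_mP_{1,\epsilon}(\tau),
\]
where the second equality uses $P_{1,\epsilon}=Q_{1,\epsilon}$. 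Hence
\[
\frac{1}{\sigma_1(m)}\bigl(J_{N,1}(T_m.\tau)-Q_{1,\epsilon}(T_m.\tau)\bigr)=\frac{1}{\sigma_1(m)}\,T_mF_{1,\epsilon}(\tau),
\]
which is precisely the quantity bounded, at $z_0=\tau$ with $n=1$, by Proposition~\ref{dis_J}:
\[
\left|\frac{1}{\sigma_1(m)}T_mF_{1,\epsilon}(\tau)-\lim_{\epsilon''\to0}\int_{\mathcal{F}_N(\epsilon'')}J_{N,1}(z)\,d\mu(z)\right|\le C_{\tau,\epsilon'}\,m^{-\frac12+\theta+\epsilon'}\max\{\|F_{1,\epsilon}\|_2,\|\Delta^2F_{1,\epsilon}\|_2\}.
\]

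Finally I would multiply each such estimate by $\nu_\tau^{(N)}(f)$, sum over the finite set $\tau\in S$, and apply the triangle inequality. By linearity of the divisor pairing the left-hand sides assemble into $\frac{1}{\sigma_1(m)}\bigl(J_{N,1}(T_m.D_f)-Q_{1,\epsilon}(T_m.D_f)\bigr)-h_f\lim_{\epsilon''\to0}\int_{\mathcal{F}_N(\epsilon'')}J_{N,1}\,d\mu$, while the right-hand sides are dominated by $\bigl(\sum_{\tau\in S}|\nu_\tau^{(N)}(f)|\bigr)\bigl(\max_{\tau\in S}C_{\tau,\epsilon'}\bigr)m^{-\frac12+\theta+\epsilon'}\max\{\|F_{1,\epsilon}\|_2,\|\Delta^2F_{1,\epsilon}\|_2\}$, the maximum over the finite $S$ being finite (one may take a single compact neighborhood of $S$ in Corollary~\ref{pointwise_convergence}). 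Setting $H_f:=\sum_{\tau\in S}|\nu_\tau^{(N)}(f)|$ and $C(f,\epsilon'):=\max_{\tau\in S}C_{\tau,\epsilon'}$ then yields \eqref{temp1}. The main obstacle is the identity $P_{1,\epsilon}=Q_{1,\epsilon}$: it is the sole step requiring genuine geometric input, namely reduction theory at the cusp $\infty$ together with the specific placement $\mathcal{F}_N\subset\{|\mathrm{Re}(z)|\le\frac12\}$, and one must verify carefully that $\tilde z$ is indeed the orbit representative of largest imaginary part and that the width-one normalization makes the surviving Poincaré term unambiguous; once this is in hand, the remainder is bookkeeping built directly on Proposition~\ref{dis_J}.
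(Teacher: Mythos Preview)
Your proposal is correct and follows essentially the same approach as the paper: both arguments apply Proposition~\ref{dis_J} pointwise at each $\tau$ in the support of $D_f$, sum with weights $\nu_\tau^{(N)}(f)$ via the triangle inequality to define $H_f$ and $C(f,\epsilon')$, and reduce $P_{1,\epsilon}$ to $Q_{1,\epsilon}$ by the reduction-theory observation that at most the identity coset in $\Gamma_0(N)_\infty\backslash\Gamma_0(N)$ can push $\tilde z$ above height~$1$. The only cosmetic difference is ordering: the paper first establishes the bound with $P_{1,\epsilon}$ and then proves $P_{1,\epsilon}=Q_{1,\epsilon}$, whereas you reverse these steps.
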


\begin{proof}
Let $\epsilon>0$ be fixed.
Note that
\[
J_{N,1}(T_m . D_f) = \sum_{\tau\in \mathcal{F}_N} \nu_{\tau}^{(N)}(f) \sum_{ad=m \atop b \pmod{d}} J_{N,1}\left(\frac{a\tau+b}{d}\right)
\]
and
\begin{eqnarray*}
P_{1,\epsilon}(T_m . D_f) &=& \sum_{\tau\in \mathcal{F}_N} \nu_{\tau}^{(N)}(f) \sum_{ad=m, \atop b \pmod{d}} P_{1,\epsilon}\left(\frac{a\tau+b}{d}\right).
\end{eqnarray*}
Therefore, by Proposition~\ref{dis_J},  for any $m$ with $\gcd(m,N)=1$, we have
\begin{multline} \label{limit_P}
\left| \frac{1} {\sigma_1(m)} \left( J_{N,1}(T_m . D_f) - P_{1,\epsilon}(T_m . D_f) \right) -h_f \lim_{\epsilon''\to0} \int_{\mathcal{F}_N(\epsilon'')} J_{N,1}(z)d\mu(z)\right|
\\ \leq
H_f C(f, \epsilon') m^{-\frac12 + \theta+\epsilon'}\max\{\| F_{1,\epsilon} \|_2, \|\Delta^2 F_{1, \epsilon}\|_2\}, \end{multline}
for any  $\epsilon'>0$,
where $H_f := \sum_{\tau\in \mathcal{F}_N}  |\nu_\tau^{(N)}(f)|$
and   $C(f,\epsilon') := \mathrm{max} \{ C_{\tau, \epsilon'}\ |\ \tau\in \mathcal{F}_N,\ \nu_{\tau}^{(N)}(f)\neq0\}$.

%
%

Recall that $\tilde{z}$ is a unique complex number in $\mathcal{F}_N$ which is equivalent to $z$ under the action of $\Gamma_0(N)$.
If $\im(\tilde{z}) >1$, then for any $\gamma\in \Gamma_0(N)$, $\im(\gamma \tilde{z}) \leq 1$ unless $\gamma\in \Gamma_0(N)_\infty$.

Suppose that $\im(\tilde{z}) \leq 1$ and that there exists $\gamma\in \Gamma_0(N)$ such that $\im(\gamma \tilde{z})>1$.
Then, there exists $\ell\in \ZZ$ such that $-\frac{1}{2} < \mathrm{Re}(\gamma\tilde{z}) +\ell \leq \frac{1}{2}$, and so
\[ \gamma \tilde{z} + \ell = \mat 1 & \ell \\ 0 & 1\emat \gamma \tilde{z} \in \mathcal{F}_N. \]
Since $\sm 1 & \ell \\ 0 & 1\esm \gamma\in \Gamma_0(N)$, we have $\gamma \tilde{z}+\ell=\tilde{z}$,
so $\im(\gamma \tilde{z}) = \im(\tilde{z}) \leq 1$,
which is a contradiction.
Therefore, if $\im(\tilde{z})\leq 1$, then for any $\gamma\in \Gamma_0(N)$, we get $\im(\gamma \tilde{z}) \leq 1$.

%

Thus, we have
\[
P_{1, \epsilon}(z) = P_{1, \epsilon}(\tilde{z})
= \sum_{\gamma\in \Gamma_0(N)_\infty \bsl \Gamma_0(N)} \psi_{\epsilon}(\im(\gamma \tilde{z})) e(-\gamma \tilde{z})
= Q_{1, \epsilon}(z).
\]
Therefore, from (\ref{limit_P}),
we obtain the desired result.
\end{proof}

From Proposition \ref{dis_J} and Proposition \ref{first_part}, we obtain the following theorem.
This gives the distribution of values of $J_{N,1}$ on Hecke orbits.

\begin{thm} \label{J_version}
We have
\begin{equation*}
\lim_{m \rightarrow \infty}
\frac{1}{\sigma_1(m)} \left ( J_{N,1}(T_m . D_f) - e\left(\left(T_m . D_f\right)_{\; >1}\right) \right)\\
=\frac{3h_f}{\pi [\mathrm{SL}_2(\mathbb{Z}):\Gamma_0(N)]} \lim_{\epsilon''\to0} \int_{\mathcal{F}_{N}(\epsilon'')}  J_{N,1}(z)\frac{dxdy}{y^2}.
\end{equation*}
\end{thm}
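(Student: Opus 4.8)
Theorem \ref{J_version} is the payoff of the two preceding propositions, so the plan is essentially to take the limit $m\to\infty$ in Proposition \ref{first_part} and then identify the surviving term $Q_{1,\epsilon}(T_m.D_f)$ with the explicit quantity $e((T_m.D_f)_{>1})$. The right-hand side of \eqref{temp1} is bounded by $H_f C(f,\epsilon') m^{-1/2+\theta+\epsilon'}\max\{\|F_{1,\epsilon}\|_2,\|\Delta^2 F_{1,\epsilon}\|_2\}$, and since $\theta\leq 7/64<1/2$ we may fix $\epsilon'$ small enough that the exponent $-\tfrac12+\theta+\epsilon'$ is negative. For a \emph{fixed} $\epsilon>0$ the factor $\max\{\|F_{1,\epsilon}\|_2,\|\Delta^2 F_{1,\epsilon}\|_2\}$ is a constant independent of $m$, so letting $m\to\infty$ forces the left-hand side of \eqref{temp1} to zero. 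This gives
\[
\lim_{m\to\infty}\frac{1}{\sigma_1(m)}\left(J_{N,1}(T_m.D_f)-Q_{1,\epsilon}(T_m.D_f)\right)
= h_f\lim_{\epsilon''\to0}\int_{\mathcal{F}_N(\epsilon'')}J_{N,1}(z)\,d\mu(z),
\]
and unwinding $d\mu = \frac{3}{\pi[\mathrm{SL}_2(\mathbb{Z}):\Gamma_0(N)]}\cdot\frac{dxdy}{y^2}$ produces precisely the constant in the stated conclusion.

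The remaining task is to replace $Q_{1,\epsilon}(T_m.D_f)$ by $e\big((T_m.D_f)_{>1}\big)$ and eliminate the auxiliary cutoff parameter $\epsilon$. By definition $Q_{1,\epsilon}(z)=\psi_\epsilon(\mathrm{Im}(\tilde z))e(-\tilde z)$, so
\[
Q_{1,\epsilon}(T_m.D_f)=\sum_{\tau\in\mathcal{F}_N}\nu_\tau^{(N)}(f)\sum_{\substack{ad=m\\ b\!\!\pmod d}}\psi_\epsilon\!\left(\mathrm{Im}\!\left(\widetilde{\tfrac{a\tau+b}{d}}\right)\right)e\!\left(-\widetilde{\tfrac{a\tau+b}{d}}\right).
\]
Here $\psi_\epsilon$ vanishes for $\mathrm{Im}(\tilde z)\leq 1$ and equals $1$ for $\mathrm{Im}(\tilde z)>1+\epsilon$, so the only surviving summands are those Hecke points lying high in the cusp neighbourhood, matching the definition of $D_{>1}$. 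First I would observe that for any fixed $m$ there are only finitely many Hecke points, so their imaginary parts $\mathrm{Im}(\widetilde{(a\tau+b)/d})$ avoid a discrete set of values; choosing $\epsilon$ smaller than the gap between $1$ and the next imaginary part above $1$ gives $\psi_\epsilon=1$ on exactly those points with $\mathrm{Im}(\tilde z)>1$ and $\psi_\epsilon=0$ on the rest. Hence for that $m$ we get the exact equality $Q_{1,\epsilon}(T_m.D_f)=e\big((T_m.D_f)_{>1}\big)$, recalling that $e((T_m.D_f)_{>1})=\sum\nu_\tau^{(N)}(f)\,e(-\tilde z)$ taken over the high Hecke points.

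The main obstacle is the interchange of the two limits $m\to\infty$ and $\epsilon\to0$, since the bound in \eqref{temp1} depends on $\epsilon$ through $\max\{\|F_{1,\epsilon}\|_2,\|\Delta^2 F_{1,\epsilon}\|_2\}$, which may blow up as $\epsilon\to0$. The clean way around this is to note that we never actually need $\epsilon\to0$: the substitution $Q_{1,\epsilon}=e((T_m.D_f)_{>1})$ can be arranged for each fixed $m$ with an $m$-dependent choice of $\epsilon$, but a uniform choice also works because, as shown in the proof of Proposition \ref{first_part}, the value $P_{1,\epsilon}(z)$ already coincides with $Q_{1,\epsilon}(z)$ and the support condition $\mathrm{Im}(\tilde z)>1$ is stable: once $\mathrm{Im}(\tilde z)\le 1$, every $\Gamma_0(N)$-translate also has imaginary part $\le 1$, so no Hecke point can straddle the cutoff. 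Thus I would fix a single small $\epsilon$ at the outset, run the $m\to\infty$ limit with the constant $\max\{\|F_{1,\epsilon}\|_2,\|\Delta^2 F_{1,\epsilon}\|_2\}$, and verify that for this $\epsilon$ the equality $Q_{1,\epsilon}(T_m.D_f)=e((T_m.D_f)_{>1})$ holds for all large $m$ (adjusting for the finitely many exceptional $m$ that do not affect the limit). Combining this with the displayed limit identity from the first paragraph yields Theorem \ref{J_version} exactly as stated.
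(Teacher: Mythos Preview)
Your first paragraph is fine: taking $m\to\infty$ in Proposition~\ref{first_part} for fixed $\epsilon$ does give
\[
\lim_{m\to\infty}\frac{1}{\sigma_1(m)}\Bigl(J_{N,1}(T_m.D_f)-Q_{1,\epsilon}(T_m.D_f)\Bigr)
= h_f\lim_{\epsilon''\to0}\int_{\mathcal{F}_N(\epsilon'')}J_{N,1}(z)\,d\mu(z).
\]
The gap is in the second step. Your claim that for a fixed small $\epsilon$ the equality $Q_{1,\epsilon}(T_m.D_f)=e\bigl((T_m.D_f)_{>1}\bigr)$ holds for all large $m$ is false. The difference between the two is supported on Hecke points whose $\mathcal{F}_N$-representative has imaginary part in the transition band $(1,1+\epsilon]$; by the very equidistribution theorem you are using, the number of such points is asymptotically $\sigma_1(m)\cdot \mu\bigl(\{z\in\mathcal{F}_N:1<\mathrm{Im}(z)\le 1+\epsilon\}\bigr)$, a positive fraction of all Hecke points for every $\epsilon>0$. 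So
\[
\frac{1}{\sigma_1(m)}\bigl|Q_{1,\epsilon}(T_m.D_f)-e\bigl((T_m.D_f)_{>1}\bigr)\bigr|
\]
is of order $\epsilon\cdot e^{2\pi(1+\epsilon)}$ as $m\to\infty$, not zero. Your alternative of choosing $\epsilon=\epsilon(m)\to0$ runs into the problem you yourself flagged: the factor $\|\Delta^2 F_{1,\epsilon}\|_2$ in \eqref{temp1} involves derivatives of $\psi_\epsilon$ of order $\epsilon^{-4}$ and blows up. The observation about $\Gamma_0(N)$-translates of points with $\mathrm{Im}(\tilde z)\le1$ only explains why $P_{1,\epsilon}=Q_{1,\epsilon}$, which was already used in Proposition~\ref{first_part}; it says nothing about the transition band.

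What the paper does, following Duke, is to control this leftover quantitatively by a \emph{second} application of equidistribution. One introduces an incomplete Eisenstein series $g_\epsilon(z)=\sum_{\gamma\in\Gamma_0(N)_\infty\backslash\Gamma_0(N)}\phi_\epsilon(\mathrm{Im}(\gamma z))$ with $\phi_\epsilon$ a smooth bump equal to $1$ on $[1,1+\epsilon]$ and supported in $(1-\epsilon,1+2\epsilon)$, so that the transition-band contribution is dominated by $e^{2\pi(1+\epsilon)}\sum_\tau|\nu_\tau^{(N)}(f)|\,(T_m g_\epsilon)(\tau)$. Corollary~\ref{pointwise_convergence} applied to $g_\epsilon$ then gives
\[
\frac{1}{\sigma_1(m)}\bigl|Q_{1,\epsilon}(T_m.D_f)-e\bigl((T_m.D_f)_{>1}\bigr)\bigr|
\le C_f e^{2\pi(1+\epsilon)}\Bigl(\int_{\mathcal{F}_N}g_\epsilon\,d\mu + O_\epsilon(m^{-1/2+\theta+\epsilon'})\Bigr).
\]
Now one sends $m\to\infty$ first (killing both $O(m^{-1/2+\theta+\epsilon'})$ terms) and is left with an error bounded by $C_f e^{2\pi(1+\epsilon)}\int_{\mathcal{F}_N}g_\epsilon\,d\mu=O(\epsilon)$; since the left-hand side of the theorem does not involve $\epsilon$, letting $\epsilon\to0$ finishes the proof. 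The missing idea in your argument is this auxiliary $g_\epsilon$ and the second use of equidistribution to turn the transition-band error into something that is $O(\epsilon)$ uniformly in $m$.
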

\begin{proof}
Let $\epsilon>0$ be fixed.
For any positive integer $m$ which is  prime to $N$, we have
\begin{equation} \label{separated_equation}
\begin{aligned}
&\left| \frac{1}{\sigma_1(m)}\left ( J_{N,1}(T_m . D_f) - e\left(\left(T_m . D_f\right)_{\; >1}\right) \right)- h_f\lim_{\epsilon''\to0} \int_{\mathcal{F}_{N}(\epsilon'')}  J_{N,1}(z) d\mu(z)\right|\\
&\leq \left| \frac{1}{\sigma_1(m)}\left ( J_{N,1}(T_m . D_f) -   Q_{1,\epsilon}(T_m . D_f)       \right)- h_f\lim_{\epsilon''\to0} \int_{\mathcal{F}_{N}(\epsilon'')}  J_{N,1}(z) d\mu(z)\right|\\
&+ \frac{1}{\sigma_1(m)}|Q_{1,\epsilon}(T_m . D_f) -  e\left(\left(T_m . D_f\right)_{\; >1}\right)|.
\end{aligned}
\end{equation}

%
Note that
\begin{multline}\label{change_summation}
\left|Q_{1, \epsilon}(T_m. D_f) - e(-(T_m. D_f))_{>1}\right|
\\  \leq
\sum_{\substack{\tau\in \mathcal{F}_N \atop \nu_\tau^{(N)}(f)\neq 0}}  \left|\nu_{\tau}^{(N)}(f)\right| \sum_{\gamma\in T(m)}
\begin{cases}
\left|\psi_{\epsilon}(\im(\widetilde{\gamma\tau})-1\right| |e(-\widetilde{\gamma\tau})|, & \text{ if } 1 < \im(\widetilde{\gamma \tau}) \leq 1+\epsilon, \\
0, & \text{ otherwise.}
\end{cases}
\end{multline}
Now, we follow the proof of \cite[Proposition~3]{D}.
Fix $0< \epsilon < \frac{1}{4}$ and consider the incomplete Eisenstein series
%
\[
g_\epsilon(z) := \sum_{\gamma\in \Gamma_0(N)_\infty \backslash \Gamma_0(N)} \phi_\epsilon(\mathrm{Im}\gamma z),
\]
where $\phi_\epsilon:\RR_{>0}\to\RR$ is a  smooth  function supported in $(1-\epsilon, 1+2\epsilon)$ with $0\leq \phi_\epsilon(y)\leq 1$ for all $y\in \RR_{>0}$ and $\phi_\epsilon(y) = 1$ for $1\leq y\leq 1+\epsilon$.
By Corollary~\ref{pointwise_convergence}, we see that for any $\epsilon'>0$, $z_0\in \HH$, and $m$ with $\gcd(m,N)=1$,
\begin{equation} \label{g_epsilon}
\left| \frac{1}{\sigma_1(m)} T_m g_\epsilon(z_0) - \int_{\mathcal{F}_N} g_\epsilon(z)d\mu(z) \right|
\leq  C_{z_0, \epsilon'} m^{-\frac{1}{2}+\epsilon'+\theta} \max\{\|g_\epsilon \|_2, \|\Delta^2 g_{\epsilon}\|_2\}.
\end{equation}
Then, there exists a constant $D(f, \epsilon')$ such that
\begin{multline}
\frac{1}{\sigma_1(m)} \left|Q_{1, \epsilon}(T_m.D_f)- e(-(T_m.D_f)_{>1})\right|
\leq \frac{e^{2\pi(1+\epsilon)}}{\sigma_1(m)} \sum_{\substack{\tau\in \mathcal{F}_N \atop \nu_\tau^{(N)}(f)\neq 0}} \left|\nu_{\tau}^{(N)}(f)\right| (T_m g_\epsilon)(\tau)
\\ \leq
C_f e^{2\pi(1+\epsilon)} \left(\int_{\mathcal{F}_N} g_{\epsilon}(z) \; d\mu(z) + D(f,\epsilon') m^{-\frac{1}{2}+\theta+\epsilon'}
\max\{\|g_\epsilon\|_2, \|\Delta^2 g_{\epsilon}\|_2\}\right),
\end{multline}
where $C_f := \#\left\{\tau\in \mathcal{F}_N \ \bigg|\ \nu_\tau^{(N)}(f)\neq0\right\} \times \max\left\{\left|\nu_\tau^{(N)}(f)\right|\ \bigg|\ \tau\in \mathcal{F}_N\right\}$.

Therefore, from Proposition \ref{first_part} and (\ref{separated_equation}), we have
\begin{equation} \label{e:concl}
\begin{aligned}
&\left| \frac{1}{\sigma_1(m)}\left ( J_{N,1}(T_m . D_f) - e\left(\left(T_m . D_f\right)_{\; >1}\right) \right)- h_f\lim_{\epsilon''\to0} \int_{\mathcal{F}_{N}(\epsilon'')}  J_{N,1}(z) d\mu(z)\right|
\\ &\leq
\bigg(H_f C(f, \epsilon')\max\{\|F_{1, \epsilon}\|_2, \|\Delta^2 F_{1, \epsilon}\|_2\}
+C_f e^{2\pi(1+\epsilon)}D(f, \epsilon')\max\{\|g_{\epsilon}\|_2, \|\Delta^2 g_{\epsilon}\|_2\} \bigg)
m^{-\frac{1}{2}+\theta+\epsilon'}\\
&\qquad +C_f e^{2\pi(1+\epsilon)} \int_{\mathcal{F}_N} g_\epsilon(z) \; d\mu(z).
\end{aligned}
\end{equation}
For a fixed $\epsilon$, taking $m\to \infty$, we get
\begin{multline}
\lim_{m\to \infty}
\frac{1}{\sigma_1(m)}\left ( J_{N,1}(T_m . D_f) - e\left(\left(T_m . D_f\right)_{\; >1}\right) \right)\\
=
h_f\lim_{\epsilon''\to0} \int_{\mathcal{F}_{N}(\epsilon'')}  J_{N,1}(z) d\mu(z)
+ C_f e^{2\pi(1+\epsilon)} \int_{\mathcal{F}_N} g_\epsilon(z) \; d\mu(z).
\end{multline}
Note that \eqref{e:concl} holds for any fixed $0< \epsilon < \frac{1}{4}$.
Since
\[ \int_{\mathcal{F}_N} g_{\epsilon}(z) d\mu(z)
= \frac{3}{\pi[\SL_2(\mathbb{Z}) : \Gamma_0(N)]}
\int_0^\infty  \phi_\epsilon(y) \; \frac{dy}{y^2} \to 0, \]
as $\epsilon\to 0$, we get
\[
\lim_{m\to \infty}
\frac{1}{\sigma_1(m)}\left ( J_{N,1}(T_m . D_f) - e\left(\left(T_m . D_f\right)_{\; >1}\right) \right)
=
h_f\lim_{\epsilon''\to0} \int_{\mathcal{F}_{N}(\epsilon'')}  J_{N,1}(z) d\mu(z).
\]

\end{proof}

Finally, Theorem \ref{main2} comes from Theorem \ref{J_version} and (\ref{J_exponents}).


\end{document}